\title[GWP of slightly supercritical SQG and gradient estimate]{Global well-posedness of slightly supercritical SQG equations and gradient estimate}
\author[Choi]{Hyungjun Choi}
\address[Hyungjun Choi]{\newline Department of Mathematics \newline Princeton University, Princeton NJ 08544, USA}
\email{hyungjun.choi@princeton.edu}
\newtheorem{theorem}{Theorem}
\newtheorem{proposition}[theorem]{Proposition}
\newtheorem{corollary}[theorem]{Corollary}
\newtheorem{lemma}[theorem]{Lemma}
\theoremstyle{definition}
\theoremstyle{remark}
\newtheorem*{remark}{Remark}
\numberwithin{equation}{section}
\newcommand{\R}{\mathbb{R}}
\newcommand{\abs}[1]{\left\lvert #1 \right\rvert}
\newcommand{\norm}[1]{\lVert #1 \rVert}
\begin{document}

\date{\today}
\subjclass[2020]{35Q35, 76U05} 
\keywords{Surface quasi-geostrophic (SQG) equation, slightly supercritical, global well-posedness, nonlinear maximum principle, modulus of continuity}

\begin{abstract}
We prove the global regularity of smooth solutions for a dissipative surface quasi-geostrophic equation with both velocity and dissipation logarithmically supercritical compared to the critical equation. By this, we mean that a symbol defined as a power of logarithm is added to both velocity and dissipation terms to penalize the equation's criticality. Our primary tool is the nonlinear maximum principle which provides transparent proofs of global regularity for nonlinear dissipative equations. Combining the nonlinear maximum principle with a modulus of continuity, we prove a uniform-in-time gradient estimate for the critical and slightly supercritical surface quasi-geostrophic equation. It improves the previous double exponential bound by Kiselev-Nazarov-Volberg to the single exponential. In addition, we prove eventual exponential decay of the solutions.
\end{abstract}
\maketitle

\section{Introduction}
The global well-posedness of the dissipative surface quasi-geostrophic (SQG) equation \eqref{eq:1.1} has been widely studied in recent literature.
\begin{equation} \label{eq:1.1}
\left\{\begin{array}{rl} \partial_t \theta + (u\cdot \nabla) \theta + \Lambda^\alpha \theta & = 0, \\ u & = \nabla^\perp \Lambda^{-1} \theta, \end{array}\right.
\end{equation}
where $\Lambda = (-\Delta)^{\frac{1}{2}}$. The SQG equation first appeared in the mathematical literature in \cite{Con94}. Due to the positivity of the operator $\Lambda^\alpha$, it can be easily seen that the $L^p$ norms of $\theta$ are nonincreasing in time under smooth evolution. Indeed, the equation has $L^\infty$-maximum principle \cite{Cor04}. The situation differs by whether the order of the diffusion term is higher/identical/lower than the nonlinear transport term under $L^\infty$-rescaling of a solution. The case $\alpha=1$ is termed ``critical'' since the order is identical. The weak formulation and the global well-posedness for the subcritical case ($\alpha>1$) were understood in the usual sense \cite{Con99, Res95}. The global well-posedness in the critical case was first proved in \cite{Caf10} (in $\mathbb{R}^n$) and \cite{Kis07,Kis10} (in $\mathbb{T}^n$). In particular, Kiselev-Nazarov-Volberg \cite{Kis07} analyzed a breakthrough scenario of a modulus of continuity, which they called the nonlocal maximum principle and is studied further in \cite{Kis11}. Shortly after that, Constantin-Vicol \cite{Con12} presented another proof. They introduced so-called nonlinear maximum principle, which can be applied to well-posedness proofs for various active scalar equations.

In the supercritical case ($\alpha<1$), the global well-posedness for generic initial remains an open problem. There have been several results regarding conditional regularity \cite{Con08, Don09b} (a Hölder continuous solution is smooth) and eventual regularization of solutions \cite{Sil10, Dab11, Kis11}. The global regularity has been studied if velocity or dissipation is slightly supercritical. In particular, the slightly supercritical velocity was studied in \cite{Dab12}, where the veloctiy is obtained from $\theta$ by a Fourier multiplier with symbol $i\zeta^\perp \abs{\zeta}^{-1} m(\zeta)$. Here, $m(\zeta)$ grows slower than $\log\log\abs{\zeta}$ as $\zeta \to \infty$. Moreover, the slightly supercritical dissipation for several active scalar equations was studied in \cite{Dab14}, where the dissipation term is given by multiplier with behavior $P(\zeta) \sim \abs{\zeta} (\log \abs{\zeta})^{-\alpha}$ ($0\leq \alpha\leq 1$) for large $\zeta$. These proofs were based on the nonlocal maximum principle introduced in \cite{Kis07, Kis11}.

In this paper, we consider the SQG equation which is slightly supercritical in both velocity and dissipation:
\begin{equation} \label{eq:1.2}
\left\{\begin{array}{rl} \partial_t \theta + (u\cdot \nabla) \theta + \mathcal{L} \theta & = 0, \qquad x\in \R^2,\, t>0, \\ u &= \nabla^\perp \Lambda^{-1} m (\Lambda) \theta, \\
\theta(t=0) & = \theta_0.\end{array}\right.
\end{equation}
The nonlocal operator $\mathcal{L}$ is defined by a convolutional kernel as follows:
\[\mathcal{L}\theta(x) = P.V. \int_{\R^2} (\theta(x) - \theta(y)) \frac{k(x-y)}{\abs{x-y}^2} \,d y.\]
Note that $\mathcal{L} = \Lambda$ when $k(x) = \frac{1}{2\pi \abs{x}}$. The main example we have in mind is
\begin{equation} \label{eq:1.3}
k(r) = \frac{1}{r(\log(10+r^{-1}))^{\alpha_1}},\quad m(\zeta) = (\log(10+ \abs{\zeta}))^{\alpha_2},
\end{equation}
for some $\alpha_1 ,\alpha_2 \geq 0$. Throughout the paper, we assume on $k$ and $m$ as follows:
\begin{itemize} \label{assum:1} \setlength\itemsep{2ex}
	\item The kernel $k(x) = k(\abs{x})$ is radially symmetric, nonnegative, and nonincreasing. In addition, $r^2 k(r)$ is nondecreasing for $r>0$.
	\item (Slightly supercitical dissipation) For any $\epsilon>0$, 
	\[r^{1-\epsilon} k(r)\]
	is nonincreasing for $0<r<r_\epsilon$ and tends to $\infty$ as $r\to 0$.
	\item There exists a constant $c\geq 1$ such that
	\[k \left( \frac{r}{2} \right) \leq c\, k(r),\quad r > 0. \]
	\item The Fourier multiplier $m(\zeta) = m(\abs{\zeta})\geq 1$ is radially symmetric,  nondecreasing, and satisfies the Hörmander-Mikhlin condition:
	\begin{equation*}
	\abs{\zeta}^k |\nabla^k m(\zeta)| \leq C_k m(\zeta),\quad \zeta\neq 0,
	\end{equation*}
	for some constant $C_k > 0$.
	\item (Slightly supercritical velocity) For any $\epsilon>0$,
	\[\abs{\zeta}^{\epsilon} m(\abs{\zeta}^{-1})\]
	is nondecreasing in $0<\abs{\zeta}<r_\epsilon$ and tends to $0$ as $\abs{\zeta}\to 0$.
\end{itemize}
Especially, we call \eqref{eq:1.2} slightly supercritical if $k$ and $m$ satisfy the slightly supercritical-ness conditions (the second and last assumptions above).

We prove the global regularity for a solution of \eqref{eq:1.2} with help of the nonlinear maximum principle by Constantin-Vicol \cite{Con12}. We show a conditional regularity result in the first step, then we prove that required regularity is conserved by evolution in the second step. The main difference between our proof and \cite{Con12} is that we provide a modulus of continuity while \cite{Con12} argues only with uniform continuity (the only small shock condition). We say a function $f$ admits a modulus of continuity $\omega$ if $|f(x) - f(y)| \leq \omega(\abs{x-y})$ for all $x\neq y$. A modulus of continuity function $\omega:[0,\infty)\to [0,\infty)$ should be non-decreasing, $\omega(0) = 0$, and continuous at $0$. An explicit modulus of continuity for solution allows us to construct an exponential gradient estimate; Main Theorem 2.

In section \ref{sec:2}, we list some estimates regarding the relationship between the kernel and the Fourier multiplier. We provide suitable conditions for $k$ and $m$ to behave similarly as the main example \eqref{eq:1.3}. We also collect some elementary integral estimates for later use. In section \ref{sec:3}, we provide conditional regularity result for \eqref{eq:1.2}. If $\theta$ is a bounded weak solution to \eqref{eq:1.2} in time $[0,T]$ and $\theta(\cdot,t)$ admits a modulus of continuity $\omega$ for $t\in [0,T]$ which satisfies
\begin{equation} \label{eq:1.4}
\lim_{r\to 0+} \frac{m(r^{-1}) \omega(r)}{r k(r)} = 0,
\end{equation}
then $\theta$ is a smooth solution. In section \ref{sec:4}, we prove that a modulus of continuity $M \omega$ for $\theta(\cdot,t)$ is conserved by $\eqref{eq:1.2}$ if a constant $M$ depends on $\theta_0$ is sufficiently large and
\begin{equation} \label{eq:1.5}
\lim_{r\to 0+} \frac{m(r^{-1})}{k(r)} \frac{\omega'(r)}{\omega(r)} = 0.
\end{equation}
Combining two steps, we prove the following:

\noindent\textbf{Main Theorem 1.} Assume that $\theta_0 \in \mathcal{S}(\R^2)$. Suppose there is a non-decreasing, continuous, concave function $\omega:[0,\infty)\to[0,\infty)$ satisfying $\omega(0)=0$, \eqref{eq:1.4}, \eqref{eq:1.5}, and for some $\epsilon>0$,
\begin{equation} \label{eq:1.6}
\lim_{r\to 0+} \frac{\omega(r)}{r^{\epsilon}} = \infty, \quad \text{and}\quad \lim_{r\to \infty} \frac{\omega(r)}{r^\epsilon} = 0.
\end{equation}
Then there exists a global smooth solution $\theta$ of the slightly supercritical SQG equation \eqref{eq:1.2}.

\noindent A typical example is a logarithmic multiplier \eqref{eq:1.3}.
\begin{corollary} \label{cor:1}
Assume that $\theta_0 \in \mathcal{S}(\R^2)$. If
\[\alpha_1 + \alpha_2 < 1,\]
then there exists a global smooth solution $\theta$ of
\begin{equation} \label{eq:1.7}
\left\{\begin{array}{rl} \partial_t \theta + (u\cdot \nabla) \theta + \Lambda (\log^{-\alpha_1} \hspace{-.5ex} \Lambda) \theta & = 0, \\ u & = \nabla^\perp \Lambda^{-1} (\log^{\alpha_2} \hspace{-.3ex} \Lambda) \theta, \\ \theta(t=0) & = \theta_0, \end{array}\right.
\end{equation}
where $\log^{\gamma} \Lambda$ is Fourier multiplier opertor with symbol $\left(\log(10+\abs{\xi})\right)^\gamma$.
\end{corollary}

\begin{remark}
Suppose we set $\mathcal{L} = \Lambda (\log^{-\alpha_1} \Lambda)$, i.e., Fourier multiplier with symbol
\[P(\zeta) = \zeta \left(\log(10+\abs{\zeta}) \right)^{-\alpha_1},\quad \alpha_1 > 0.\]
Then Lemma \ref{lemma:6} in section \ref{sec:2} shows that it is same as considering
\[k(r) \sim \frac{1}{r\left( \log (10+ r^{-1} ) \right)^{\alpha_1}}.\]
\end{remark}

This result improves \cite{Dab12} which studied a slightly supercritical velocity, where $m(\zeta)$ being growing slower than double logarithm. Moreover, we prove the global regularity when both velocity and dissipation are slightly supercritical. Hence this paper extends \cite{Dab14}, which studied the slightly supercritical dissipation.

In section \ref{sec:5}, we prove an exponential gradient estimate for the critical and slightly supercritical SQG equation. The critical SQG equation reads as follows:
\begin{equation} \label{eq:1.8}
\left\{\begin{array}{rl} \partial_t \theta + (u\cdot \nabla) \theta + \Lambda \theta & = 0,\\
u & = \nabla^\perp \Lambda^{-1} \theta,\\
\theta(t=0) &= \theta_0 \end{array}\right.
\end{equation}

\noindent\textbf{Main Theorem 2.}
The critical SQG equation \eqref{eq:1.8} with initial $\theta_0 \in \mathcal{S}(\R^2)$ has a unique global smooth solution, and
\begin{equation} \label{eq:1.9}
\norm{\nabla \theta}_{L^\infty_{t,x}} \leq C \norm{\nabla \theta_0}_{L^\infty} \exp(C \norm{\theta_0}_{L^\infty}^{\gamma}) \quad \text{for any } \gamma > 1,
\end{equation}
where constants depend only on $\gamma$. In addition, for the logarithmically supercritical equation \eqref{eq:1.7}, similar gradient estimate holds. See Corollary \ref{cor:12} on section \ref{sec:5} for further discussion.

Our result \eqref{eq:1.9} improves the following double exponential estimate \eqref{eq:1.10}. Kiselev-Nazarov-Volberg \cite{Kis07} used a modulus of continuity $\omega(r)$, which behaves like $\log \log (r/\delta)$ for $\delta \ll r \lesssim 1$. While we use a modulus of continuity $|\log r|^{-\beta}$ for $r \lesssim 1$ with some $\beta>0$ and the method of proving global regularity is different. A similar exponential estimates were available in \cite{Mia12} for modified SQG equations while there method is a direct application of nonlocal maximum principle from Kiselev-Nazarov-Volberg. A major aspect of our method is the conditional regularity result, namely Theorem \ref{thm:8}. It asserts that some modulus of continuity which is even weaker than any Hölder continuity is sufficient for the propagation of higher regularity including $C^\infty$.

\begin{proposition}[Global well-posedness of critical SQG, \cite{Kis07}]
The critical SQG equation \eqref{eq:1.8} with initial $\theta_0\in C^\infty(\mathbb{T}^2)$ has a unique global smooth solution. In addition,
\begin{equation} \label{eq:1.10}
\norm{\nabla \theta}_{L^\infty_{t,x}} \leq C \norm{\nabla \theta_0}_{L^\infty} \exp(\exp(C \norm{\theta_0}_{L^\infty})).
\end{equation}
\end{proposition}

In section \ref{sec:6}, we prove that every subcritical Sobolev norm eventually decays. It is proved in the spirit of \cite{Con01, Cor04, Don10}.

\noindent\textbf{Main Theorem 3.} Consider a slightly supercritical SQG equation \eqref{eq:1.2} satisfying conditions of Main Theorem 1. Assume that the Fourier multiplier $P(\zeta) = P(\abs{\zeta})$ corresponding to the dissipation operator $\mathcal{L}$ satisfies
\[P(\zeta) \geq C k(\abs{\zeta}^{-1}) \geq C_\beta \abs{\zeta}^{\beta},\]
for any $0\leq \beta<1$. Suppose $\theta$ is a global smooth solution to the equation, then
\[\norm{\theta(\cdot,t)}_{\dot{H}^{s}} \leq \exp(- c(t-t_0)) \norm{\theta(\cdot,t_0)}_{\dot{H}^s},\quad t>t_0,\]
for any $s >1$, where $t_0 > 0$ depends only on $\theta_0$ and $c>0$ depends only on $\theta_0$ and $s$.

For the critical SQG equation \eqref{eq:1.8}, the decay of solutions was proved in various ways. It is known for the dissipative SQG equation \eqref{eq:1.1} for $0<\alpha\leq 1$ that
\begin{enumerate}
	\item $\norm{\theta(\cdot,t)}_{L^\infty} \leq C \norm{\theta_0}_{L^2} t^{-\frac{1}{\alpha}}$.
	\item $\norm{\theta}_{L^\infty \big([-\frac{1}{2}, 0]; C^{1,\gamma}(B_{\frac{1}{2}}) \big)} \leq C(\norm{\theta}_{L^\infty([-1,0]; C^{1-\alpha+\gamma}(B_1))}) \norm{\theta}_{L^\infty([-1,0];L^\infty(B_1))}$.
\end{enumerate}
The first assertion can be found in \cite{Cor04, Con09, Sil10} or for the critical case it is Theorem 1 in \cite{Caf10}. The second one is the application of Theorem 1.1 in \cite{Sil12}. For the critical case $\alpha=1$, the global regularity results are known and implies uniform-in-time bound on Hölder norms. Combining two results gives decay of $\norm{\theta(\cdot,t)}_{C^{1,\gamma}}$ as $t\to \infty$. Note that it do not mean that the norms decrease from the initial time or any specified time. Hence these decay estimates do not strictly improves our uniform-in-time estimate, namely Main Theorem 2.

Furthermore, a solution of the critical SQG exhibits exponential decay of Sobolev norms.
\begin{proposition}[Eventual decay of Sobolev norms, Theorem 2.7 in \cite{Don10}]
The critical SQG equation \eqref{eq:1.8} with initial $\theta_0\in \dot{H}^1(\mathbb{T}^2)$ has a unique global smooth solution. In addition,
\[\norm{\theta(\cdot,t)}_{\dot{H}^{1+\beta}(\mathbb{T}^2)} \leq C e^{-t/4} t^{-\beta},\quad t>0,\]
for any $\beta\geq 0$, where $C$ depends only on $\theta_0$ and $\beta$.
\end{proposition}
In addition to the above, the decay estimates can be found in \cite{Abi08} as well.

Lastly, the presented results can be easily reproduced for other $L^\infty$-critical active scalar equations with nonlocal dissipation and a linear balance law $u = T \theta$.

\section{Preliminaries} \label{sec:2}
The local regularity result for the SQG-type equation is standard. It is well-known that $\norm{\nabla \theta}_{L^1_t L^\infty_x}$ is a blow-up criterion.

\begin{proposition}[Local existence of smooth solution, Proposition 2.1 in \cite{Dab12}, \cite{Don09a, Don09b, Don10}] \label{prop:4}
Suppose $\theta_0 \in H^{s}(\R^2)$ and $s>1$. Then there exists $T = T(\norm{\theta_0}_{H^s})>0$ and a weak solution $\theta\in C([0,T]; H^s)$ of slightly supercritical SQG equation \eqref{eq:1.2}. Recall that \eqref{eq:1.2} is called slightly supercritical if $rk(r) = o(r^{\epsilon})$ as $r\to 0$ and $m(\zeta)= o(\abs{\zeta}^{\epsilon})$ as $\abs{\zeta}\to \infty$, for all $\epsilon > 0$. Moreover, if a weak solution $\theta$ satisfies
\[\norm{\nabla \theta}_{L^1_t L^\infty_x} = \int_0^T \norm{\nabla \theta(\cdot,t)}_{L^\infty} < \infty,\]
then $\theta\in C^\infty((0,T]\times \R^2)$.
\end{proposition}

The local well-posedness result for $H^s$ with $s>1$ is suboptimal and can be proved in a standard way. The optimal results, the local well-posedness for critical Sobolev spaces ($H^{2-\alpha}$ for \eqref{eq:1.1}) was proved in \cite{Miu07, Don10}. The stated conditional regularity result is known for standard dissipative SQG equations, see e.g. Theorem 3.5 in \cite{Don09b} or Proposition 4.2 in \cite{Mia12}. The addition of Fourier multipliers, which grows (or decays) slower than any power $\abs{\zeta}^\epsilon$, do not affect its proof.

The construction of global weak solutions to the dissipative SQG equation \eqref{eq:1.1} had been stuided in \cite{Res95}. For each initial $\theta_0\in L^2$ and arbitrary $T>0$, there exists a weak solution $\theta\in C_w([0,T]; L^2) \cap L^2([0,T]; \dot{H}^\frac{\alpha}{2})$ which satisfies the energy inequality. Moreover, $\norm{\theta(\cdot,t)}_{L^p} \leq \norm{\theta_0}_{L^p}$ for any $1<p\leq\infty$. We follow the notion of bounded weak solution in \cite{Con12}, that is $\theta\in L^\infty([0,T]; L^p \cap L^\infty)$ for some $1\leq p<\infty$.

Next, we provide assumptions on $k$ and $m$ and arrange some estimates on the kernel of the operator $\nabla \Lambda^{-1} m(\Lambda)$ and the kernel $r^{-2} k(r)$. The following lemmas reveal relations between an operator's kernel and the Fourier multiplier. Lemma \ref{lemma:5} deals with velocity $u$ and the Lemma \ref{lemma:6} deals with dissipation term $\mathcal{L} \theta$.

\begin{lemma}[Lemma 4.1 of \cite{Dab12}] \label{lemma:5}
Suppose $K$ is the kernel corresponding to the operator $\partial_j \Lambda^{-1} m(\Lambda)$ and $m$ satisfies the prescribed conditions on page \pageref{assum:1}. In other words, $m(\xi)$ being the Hörmander-Mikhlin type and it increases slower than any power $\abs{\xi}^\epsilon$ as $\xi \to \infty$. Then
\[\abs{K(x)} \leq C\abs{x}^{-d} m(\abs{x}^{-1}),\]
and
\[\abs{\nabla K(x)} \leq C \abs{x}^{-d-1} m(\abs{x}^{-1}),\]
for all $x\neq 0$.
\end{lemma}

\begin{lemma}[\cite{Dab14,Ste70}] \label{lemma:6}
Suppose $P(\zeta) = P(\abs{\zeta})$ is a radially symmetric function that is smooth, nonnegative, nondecreasing from zero, $P(0)=0$, and $P(\zeta) \to \infty$ as $\abs{\zeta} \to \infty$. In addition, assume the following for $P$:
\begin{itemize} \setlength\itemsep{2ex}
	\item There is a constant $c\geq 1$ so that $P(2\zeta) \leq c P(\zeta)$ for all $\zeta\in\R^d$.
	\item $P$ is of the Hörmander-Mikhlin type:
	\[\abs{\zeta}^k |\nabla^k P(\zeta)| \leq C_k P(\zeta),\quad \zeta\neq 0,\]
	for some constant $C_k$.
	\item $P$ satisfies growth condition
	\[\int_0^1 P(\abs{\zeta}^{-1}) \abs{\zeta} \,d\zeta < \infty.\]
\end{itemize}
Then, the corresponding radially symmetric kernel $K = \mathcal{F}^{-1} P$ satisfies
\[\abs{K(y)} \leq C \abs{y}^{-d} P(\abs{y}^{-1}),\]
and
\[\abs{\nabla K(y)} \leq C \abs{y}^{-d-1} P(\abs{y}^{-1}),\]
for all $y\not=0\in\R^d$. Moreover, if $P$ satisfies
\begin{itemize}
	\item $\displaystyle (-\Delta)^{\frac{d}{2} + 1} P(\zeta) \geq c \abs{\zeta}^{-d-2} P(\zeta)$
\end{itemize}
for some constant $c>0$. Then, $K$ is bounded below as
\[K(y) \geq c \abs{y}^{-d} P(\abs{y}^{-1}),\]
for all sufficiently small $y$.
\end{lemma}

Lastly, we denote some technical integral inequalities which we will use frequently.
\begin{lemma} \label{lemma:7}
Suppose $k$ and $m$ satisfy the prescribed conditions and $\omega:[0,\infty)\to [0,\infty)$ is a nondecreasing, continuous function with $\omega(0)=0$. In addition, assume that there exists $\epsilon>0$ such that
\begin{equation*} 
\lim_{r\to 0+} \frac{\omega(r)}{r^{\epsilon}} = \infty, \quad \text{and}\quad \lim_{r\to \infty} \frac{\omega(r)}{r^\epsilon} = 0.
\end{equation*}
For sufficiently small $r$, namely $0<r<r_0$, the following inequalities hold.
{\everymath{\displaystyle} \begin{enumerate}[label=(\roman*)]
	\item $\int_r^\infty \frac{k(\rho)}{\rho} \,d\rho \geq \frac{k(r)}{2}$.
	\item $\int_{r/2}^{r} \frac{\omega(\rho) k(\rho)}{\rho} \,d \rho \leq C \omega(r) k(r)$ for a constant $C>0$.
	\item $\int_{r}^{\infty} \omega(\rho) \left(-\frac{k'(\rho)}{\rho} + \frac{2 k(\rho)}{\rho^2} \right) \,d \rho \leq \frac{C \omega(r) k(r)}{r}$ for a constant $C>0$.
	\item $\int_0^r \frac{m(\rho^{-1})^2}{\rho k(\rho)} \,d \rho \leq \frac{4 m(r^{-1})^2}{k(r)}$.
	\item $\int_r^\infty \frac{\omega(\rho) m(\rho^{-1})}{\rho^2} \,d \rho \leq \frac{C \omega(r) m(r^{-1})}{r}$ for a constant $C>0$.
\end{enumerate}}
\end{lemma}

{\everymath{\displaystyle} \begin{proof}
\begin{enumerate}[label=(\roman*)]
	\item $\int_r^\infty \frac{k(\rho)}{\rho} \,d \rho \geq r^2 k(r) \int_r^\infty \frac{\,d\rho}{\rho^3} = \frac{k(r)}{2}$.
	\item $\int_{r/2}^{r} \frac{\omega(\rho) k(\rho)}{\rho} \,d \rho \leq \omega(r) k\left(\frac{r}{2} \right) \int_{r/2}^r \frac{\,d \rho}{\rho} \leq c\log 2 \cdot \omega(r) k(r)$.
	\item $\int_{r}^{\infty} \omega(\rho) \left(-\frac{k'(\rho)}{\rho} + \frac{2 k(\rho)}{\rho^2} \right) \,d \rho \leq \int_{r}^\infty \frac{\omega(\rho)}{\rho^{\epsilon}} \frac{d}{d\rho}\left(-\frac{2(1-\epsilon)^{-1}k(\rho)}{\rho^{1-\epsilon}} \right) \,d\rho \leq \frac{\omega(r)}{r^{\epsilon}} \cdot \frac{2(1-\epsilon)^{-1} k(r)}{r^{1-\epsilon}}$.
	\item $\int_0^r \frac{m(\rho^{-1})^2}{\rho k(\rho)} \,d \rho \leq \int_0^r \frac{(\rho^{1/4} m(\rho^{-1}))^2}{\rho^{3/4} \big(\rho^{3/4} k(\rho) \big)} \,d\rho \leq \frac{(r^{1/4} m(r^{-1}))^2}{r^{3/4} k(r)} \int_0^r \frac{\,d \rho}{\rho^{3/4}} \leq \frac{4 m(r^{-1})^2}{k(r)}$.
	\item $\int_r^\infty \frac{\omega(\rho) m(\rho^{-1})}{\rho^2} \,d \rho \leq r^{-\epsilon} \omega(r) m(r^{-1}) \int_r^\infty \frac{\,d\rho}{\rho^{2-\epsilon}} \leq \frac{(1-\epsilon)^{-1} \omega(r) m(r^{-1})}{r}$. \qedhere
\end{enumerate}
\end{proof}}

\section{Conditional regularity} \label{sec:3}
In this section, we prove that some modulus of continuity for a solution of \eqref{eq:1.2} implies global regularity.

\begin{theorem} \label{thm:8}
Suppose $\theta$ is a bounded weak solution, i.e., $\theta \in L^\infty([0,T];L^p\cap L^\infty)$ where $1\leq p<\infty$, of the slightly supercritical SQG equation
\begin{equation*}
\left\{\begin{array}{rl} \partial_t \theta + (u\cdot \nabla) \theta + \mathcal{L} \theta & = 0, \\ u & = \nabla^\perp \Lambda^{-1} m(\Lambda) \theta, \\ \theta(t=0) &= \theta_0. \end{array}\right.
\end{equation*}
with $k$ and $m$ satisfying the prescribed conditions. Assume that $\Omega$ satisfies \eqref{eq:1.6} with $\omega$ replaced with $\Omega$. Also assume that $k$, $m$, and $\Omega$ satisfy
\[\lim_{r\to 0+} \frac{m(r^{-1}) \Omega(r)}{r k(r)} = 0.\]
If $\theta$ admits a modulus of continuity $\Omega$ for $[0,T]$, then there is a uniform-in-time estimate for $\norm{\nabla \theta(\cdot,t)}_{L^\infty}$ and $\theta$ is a smooth solution on $(0,T]$.
\end{theorem}

\begin{proof}
It suffices to prove the assertion for a smooth solution $\theta$. One may consider regularized equation with the term $-\epsilon \Delta \theta$, get a uniform estimate independent of $\epsilon$, and then take the inviscid limit $\epsilon\to 0+$ to get the desired result. Due to Proposition \ref{prop:4}, it suffices to show that $\norm{\nabla \theta(\cdot,t)}_{L^\infty}$ stays uniformly bounded in time.

\noindent{\it Step 1}. Evolution of $\abs{\nabla \theta}^2$\\
We take gradient on both sides of the equation to get:
\[\partial_t \nabla \theta + u \cdot \nabla^2 \theta + \nabla u \cdot \nabla \theta + \mathcal{L} \nabla \theta = 0.\]
Multiply $\nabla\theta$ to both sides:
\[\nabla\theta\cdot \partial_t \nabla \theta + \nabla \theta \cdot u \cdot \nabla^2 \theta + \nabla \theta \cdot \nabla u \cdot \nabla \theta + \nabla \theta \cdot \mathcal{L} \nabla \theta = 0.\]
Considering singular integral formulation for the nonlocal operator $\mathcal{L}$ gives:
\begin{equation} \label{eq:3.1}
\frac{1}{2} (\partial_t + u \cdot \nabla + \mathcal{L}) \abs{\nabla\theta}^2 + \frac{1}{2}\underbrace{\int_{\R^2} \frac{\abs{\nabla \theta(x,t) - \nabla\theta(y,t)}^2}{\abs{x-y}^{2}} k(x-y) \,d y}_{=: D(x,t)} = -\nabla \theta \cdot \nabla u \cdot \nabla \theta.
\end{equation}
Due to the maximum principle for $\mathcal{L}$, it suffices to show that
\[(\partial_t + u\cdot \nabla + \mathcal{L})\abs{\nabla\theta}^2(x,t) < 0\]
whenever $\abs{\nabla \theta(x,t)}$ is sufficiently large in terms of $\norm{\theta_0}_{L^\infty}, k, m$, and $\Omega$.

\noindent{\it Step 2}. Pointwise lower bound of $D(x,t)$\\
A smooth function $\varphi:[0,\infty) \to \R$ is a non-decreasing cutoff function such that
\[\varphi(x) = 0 \text{ on } x\leq \frac{1}{2},\quad \varphi(x) = 1 \text{ on } x \geq 1,\quad 0\leq\varphi' \leq 4.\]
For some sufficiently small $R=R(x,t)>0$ which will be determined later, we have the following estimate on $D(x,t)$:
\begin{align*}
\frac{D(x,t)}{2\pi} & \geq \frac{1}{2\pi} \int_{\R^2} \frac{\abs{\nabla \theta(x,t) - \nabla \theta(y,t)}^2}{\abs{x-y}^2} k(x-y) \varphi\Big(\frac{\abs{x-y}}{R}\Big) \,d y \\
& \geq \abs{\nabla\theta(x,t)}^2 \frac{1}{2\pi} \int_{\abs{x-y} \geq R} \frac{k(x-y)}{\abs{x-y}^2} \,d y - 2 \nabla \theta(x,t) \cdot \frac{1}{2\pi} \int_{\R^2} \nabla \theta(y,t) \frac{k(x-y)}{\abs{x-y}^2} \varphi\Big(\frac{\abs{x-y}}{R}\Big) \,d y \\
& \geq \abs{\nabla\theta(x,t)}^2 \int_R^\infty \frac{k(\rho)}{\rho} \,d \rho - 2 \nabla \theta (x,t) \cdot \frac{1}{2\pi} \int_{\R^2} (\theta(y,t)-\theta(x,t)) \left\{ \nabla_v \left( \frac{k(\abs{v})}{\abs{v}^2} \varphi \Big(\frac{\abs{v}}{R} \Big) \right)\right\}_{v=x-y}  \,d y \\
& \geq \frac{1}{2} \abs{\nabla\theta(x,t)}^2 k(R) - 2 \abs{ \nabla \theta (x,t)} \int_0^\infty \rho \Omega(\rho) \abs{ \frac{d}{d\rho} \Big(\frac{k(\rho) \varphi(\rho/R)}{\rho^2} \Big)} \,d \rho.
\end{align*}
By Lemma \ref{lemma:7}(ii) and (iii),
\begin{align*}
& \int_0^\infty \rho \Omega(\rho) \abs{ \frac{d}{d\rho} \Big(\frac{k(\rho) \varphi(\rho/R)}{\rho^2} \Big)} \,d \rho \\
& \qquad \leq \frac{4}{R} \int_{R/2}^{R} \frac{\Omega(\rho) k(\rho)}{\rho} \,d\rho + \int_{R/2}^\infty \Omega(\rho) \left(-\frac{k'(\rho)}{\rho} + \frac{2k(\rho)}{\rho^2} \right) \,d\rho \\
& \qquad \leq \frac{c_1}{8} \frac{\Omega(R) k(R)}{R}.
\end{align*}
Therefore,
\[\frac{D(x,t)}{2\pi} \geq \frac{1}{2} \abs{\nabla\theta(x,t)}^2 k(R) - \abs{ \nabla \theta (x,t)} \frac{c_1 \Omega(R) k(R)}{4R} = \frac{1}{2}\abs{\nabla\theta(x,t)}^2 k(R) \left( 1 - \frac{c_1}{2\abs{\nabla\theta(x,t)}} \frac{\Omega(R)}{R} \right).\]
Set $R = R(x,t) >0$ to satisfy
\begin{equation} \label{eq:3.2}
\frac{\Omega(R)}{R} = \frac{\abs{\nabla\theta(x,t)}}{c_1}.
\end{equation}
Then
\begin{equation} \label{eq:3.3}
D(x,t) \geq 4c_2 \abs{\nabla\theta (x,t)}^2 k(R).
\end{equation}
Note that $R(x,t)$ is arbitrarily small when $\abs{\nabla\theta(x,t)}$ is large enough.

\noindent{\it Step 3}. Estimate of $\nabla u$
\[\nabla u(x,t) = \nabla^\perp \Lambda^{-1} m(\Lambda) \nabla \theta(x,t) = P.V. \int_{\R^2} K(x-y) (\nabla \theta(x,t) - \nabla \theta(y,t)) \,d y.\]
We estimate $\nabla u$ by splitting $\R^2$ into two pieces; an inner piece $\abs{x-y} \leq r$ and an outer piece $\abs{x-y} > r$ for some $r = r(x,t) > 0$.

\noindent For the inner piece, we use the Cauchy-Schwartz inequality:
\begin{align*}
\abs{\nabla u_\text{in} (x,t)} & \leq C \sqrt{\bigg(\int_{\abs{x-y} \leq r} \frac{\abs{\nabla\theta(x,t) - \nabla\theta(y,t)}^2}{\abs{x-y}^2} k(x-y) \,d y \bigg)\bigg( \int_{\abs{x-y} \leq r} K(x-y)^2 \frac{\abs{x-y}^2}{k(x-y)} \,d y\bigg)} \\
&\leq C \sqrt{D(x,t) \int_0^r \frac{m(\rho^{-1})^2}{\rho k(\rho)} \,d\rho } \leq c_3 \sqrt{D(x,t) \frac{m(r^{-1})^2}{k(r)}}. \tag{Lemma \ref{lemma:5} and \ref{lemma:7}(iv)}
\end{align*}
For the outer piece, we apply integration by parts and use the modulus of continuity of $\theta$:
\begin{align*}
\abs{\nabla u_\text{out}(x,t)} &\leq \abs{\int_{\abs{x-y} > r} \big(\theta(x,t) - \theta(y,t)\big) \nabla K(x-y) \,d y} \\
& \quad + \abs{\int_{\abs{x-y} = r} \big(\theta(x,t) - \theta(y,t)\big) K(x-y) \nu(y) \,d \sigma(y)} \\
&\leq C \int_{r}^\infty \frac{\Omega(\rho)m(\rho^{-1})}{\rho^2} \,d\rho + C \frac{\Omega(r) m(r^{-1})}{r} \tag{Lemma \ref{lemma:5}} \\
&\leq c_4\frac{\Omega(r) m(r^{-1})}{r}. \tag{Lemma \ref{lemma:7}(v)}
\end{align*}
Therefore,
\begin{align} \label{eq:3.4}
\begin{aligned}
\abs{\nabla u} \abs{\nabla \theta}^2 & \leq \abs{\nabla u_\text{in}} \abs{\nabla \theta}^2 + \abs{\nabla u_\text{out}} \abs{\nabla \theta}^2 \\
& \leq \frac{D}{4} + c_3^2  \frac{m(r^{-1})^2}{k(r)} \abs{\nabla \theta}^4 + c_4 \frac{\Omega(r) m(r^{-1})}{r} \abs{\nabla \theta}^2.
\end{aligned}
\end{align}

\noindent{\it Step 4}. Maximum principle\\
Combining \eqref{eq:3.1}, \eqref{eq:3.3}, and \eqref{eq:3.4},
\begin{align*}
& (\partial_t + u \cdot \nabla + \mathcal{L}) \abs{\nabla\theta}^2 + \frac{D}{2} + 2c_2 k(R) \abs{\nabla\theta}^2 \\
\leq &\, (\partial_t + u \cdot \nabla + \mathcal{L}) \abs{\nabla\theta}^2 + D \\
\leq &\, 2\abs{\nabla u} \abs{\nabla \theta}^2 \leq \frac{D}{2} + 2c_3^2 \frac{m(r^{-1})^2}{k(r)} \abs{\nabla \theta}^4 + 2c_4 \frac{\Omega(r) m(r^{-1})}{r} \abs{\nabla\theta}^2.
\end{align*}
Hence
\[(\partial_t + u \cdot \nabla + \mathcal{L}) \abs{\nabla\theta}^2 + 2c_2 k(R) \abs{\nabla\theta}^2 \leq 2c_3^2 \frac{m(r^{-1})^2}{k(r)} \abs{\nabla \theta}^4 + 2c_4 \frac{\Omega(r) m(r^{-1})}{r} \abs{\nabla\theta}^2.\]
What we want to show are
\begin{equation} \label{eq:3.5}
2c_3^2 \frac{m(r^{-1})^2}{k(r)} \abs{\nabla \theta}^2 < c_2 k(R) ,
\end{equation}
and
\begin{equation} \label{eq:3.6}
2c_4 \frac{\Omega(r) m(r^{-1})}{r} < c_2 k(R) ,
\end{equation}
whenever $\abs{\nabla\theta}$ is sufficiently large. Recall \eqref{eq:3.2} that
\[\abs{\nabla\theta} = c_1 \frac{\Omega(R)}{R}.\]
Hence \eqref{eq:3.5} is equivalent to
\[\frac{m(r^{-1})^2 \Omega(R)^2}{R^2 k(r) k(R)} < \frac{c_2}{2c_1^2 c_3^2}.\]
And \eqref{eq:3.6} is equivalent to
\[\frac{m(r^{-1}) \Omega(r)}{r k(R)} < \frac{c_2}{2c_4}.\]
Set $r = R = R(x,t) > 0$ and then the above two inequalities hold for sufficiently small $R$, since we assumed
\[\lim_{r\to 0+} \frac{m(r^{-1}) \Omega(r)}{r k(r)} = 0. \qedhere\]
\end{proof}

\section{Conservation of modulus of continuity} \label{sec:4}
In this section, we prove that some modulus of continuity of $\theta(\cdot,t)$ is conserved by \eqref{eq:1.2}.

\begin{theorem} \label{thm:9}
Suppose $\theta$ is a bounded weak solution of the slightly supercritical SQG equation
\begin{equation*}
\left\{\begin{array}{rl} \partial_t \theta + (u\cdot \nabla) \theta + \mathcal{L} \theta & = 0, \\ u & = \nabla^\perp \Lambda^{-1} m(\Lambda) \theta, \\ \theta(t=0) &= \theta_0. \end{array}\right.
\end{equation*}
with $k$ and $m$ satisfying the prescribed conditions. Suppose a $C^1$-function $\omega:[0,\infty)\to [0,\infty)$ with $\omega(0)=0$ is nondecreasing, concave, and satisfies \eqref{eq:1.6} and
\[\lim_{r\to 0+} \frac{m(r^{-1})}{k(r)} \frac{\omega'(r)}{\omega(r)} = 0.\]
Then there exists a constant $M>0$ depending on $\norm{\theta_0}_{L^\infty}$, $k$, $m$, and $\omega$ such that if $\theta_0 \in L^p \cap L^\infty$ satisfies the modulus of continuity $\frac{M}{2}\omega$, then $\theta$ satisfies the modulus of continuity $M\omega$ as long as the solution is defined.
\end{theorem}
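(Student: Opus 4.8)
\section*{Proof proposal}

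The plan is to run the Kiselev--Nazarov--Volberg breakthrough argument for a modulus of continuity, adapted to the operator $\mcal{L}$ and to the slightly supercritical balance law. As in the proof of Theorem~\ref{thm:7} it is enough to treat smooth solutions (regularize by $-\epsilon\lap\theta$, obtain an $\epsilon$-uniform bound, and pass to the inviscid limit); along the way the $L^\infty$-maximum principle for $\mcal{L}$ gives $\norm{\theta(\cdot,t)}_{L^\infty}\le\norm{\theta_0}_{L^\infty}$. Fix $M$ (to be chosen at the end) and assume $\theta_0$ obeys the modulus $\tfrac{M}{2}\omega$. If $\theta$ fails to obey $M\omega$ at some later time, let $T$ be the first such time; by continuity in $t$ the function $\theta(\cdot,T)$ still obeys $M\omega$, but now $\theta(x,T)-\theta(y,T)=M\omega(\abs{x-y})$ for some $x\neq y$. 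Write $\xi=\abs{x-y}$, $e=\tfrac{x-y}{\xi}$. The case $\xi=0$ is excluded because \eqref{eq:2.1} forces $\omega'(0^+)=+\infty$, so $\theta(x,T)-\theta(y,T)\le\norm{\grad\theta(\cdot,T)}_{L^\infty}\abs{x-y}<M\omega(\abs{x-y})$ for small $\abs{x-y}$; the case of large $\xi$ (and the failure of the supremum to be attained) is excluded once $M$ is so large that $M\omega(r)>2\norm{\theta_0}_{L^\infty}$ for $r$ bounded below, since $\theta(x,T)-\theta(y,T)\le 2\norm{\theta_0}_{L^\infty}$. It then suffices to show $\tfrac{d}{dt}\big(\theta(x,t)-\theta(y,t)\big)\big|_{t=T}<0$, contradicting the choice of $T$.

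Since $(x',y')\mapsto\theta(x',T)-\theta(y',T)-M\omega(\abs{x'-y'})$ attains its maximum, $0$, at $(x,y)$, the first-order conditions in $x'$ and $y'$ give $\grad\theta(x,T)=\grad\theta(y,T)=M\omega'(\xi)\,e$. Hence the transport part of $\tfrac{d}{dt}(\theta(x,t)-\theta(y,t))$ collapses to $M\omega'(\xi)\,(u(x,T)-u(y,T))\cdot e$, so it is bounded by $M\omega'(\xi)\,\abs{u(x,T)-u(y,T)}$. For the velocity increment I would split $\R^2$ into the regions $\abs{z-x}\le\xi$, $\xi<\abs{z-x}\le 1$, $\abs{z-x}>1$ (and symmetrically about $y$), use the kernel estimates of Lemma~\ref{lemma:4} together with the oddness and the vanishing of the spherical averages of the kernel $K$ of $\grad^\perp\Lambda^{-1}m(\Lambda)$, invoke the modulus $M\omega$ of $\theta(\cdot,T)$ and the bound $\norm{\theta(\cdot,T)}_{L^\infty}\le\norm{\theta_0}_{L^\infty}$ on the respective regions, and apply Lemma~\ref{lemma:6}(v) to the intermediate region, to obtain
\[
\abs{u(x,T)-u(y,T)}\ \le\ C\,M\,\omega(\xi)\,m(\xi^{-1})\ +\ C\,\norm{\theta_0}_{L^\infty}\,o_{\xi\to 0}(1).
\]

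For the dissipation I would use the singular-integral representation
\[
\mcal{L}\theta(x,T)-\mcal{L}\theta(y,T)=\mathrm{P.V.}\int_{\R^2}\frac{k(h)}{\abs{h}^2}\Big(\theta(x,T)-\theta(x+h,T)-\theta(y,T)+\theta(y+h,T)\Big)\intd h,
\]
symmetrize in $h\mapsto-h$, and exploit the touching relation $\theta(x,T)-\theta(y,T)=M\omega(\xi)$ together with the concavity and subadditivity of $\omega$: the integrand is then pointwise nonnegative, and on a fixed solid-angle sector about $\pm e$ it is bounded below (using $k(h/2)\le c\,k(h)$) by a multiple of $M\big(2\omega(\xi)-\omega(\xi+\abs{h})-\omega(\xi-\abs{h})\big)_{+}\,k(\abs{h})\,\abs{h}^{-2}$. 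Integrating (the rest of $\R^2$ contributes nonnegatively), and using Lemma~\ref{lemma:6}(i)--(iii), yields the nonlinear-maximum-principle lower bound
\[
\mcal{L}\theta(x,T)-\mcal{L}\theta(y,T)\ \ge\ c_0\,M\int_0^{\xi}\frac{\big(2\omega(\xi)-\omega(\xi+\rho)-\omega(\xi-\rho)\big)k(\rho)}{\rho}\intd\rho\ >\ 0,
\]
a strictly positive quantity whose size is controlled by the concavity defect of $\omega$ at scale $\xi$ against the weight $k$. Combining this with the transport bound, $\tfrac{d}{dt}(\theta(x,t)-\theta(y,t))|_{t=T}<0$ once the right-hand side of the transport bound, multiplied by $M\omega'(\xi)$, is smaller than the above. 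Dividing by $M\,\omega(\xi)\,k(\xi)$ and using $\xi\omega'(\xi)\le\omega(\xi)$, this reduces to the smallness of $M\,\tfrac{m(\xi^{-1})}{k(\xi)}\tfrac{\omega'(\xi)}{\omega(\xi)}$ and of $\tfrac{\norm{\theta_0}_{L^\infty}}{\xi k(\xi)}\,o(1)$, both of which hold for $\xi<\xi_0$, where $\xi_0=\xi_0(M,k,m,\omega,\norm{\theta_0}_{L^\infty})>0$ is furnished by \eqref{eq:1.5} and the conditions on $k$. Finally choose $M$ so large (depending on $\norm{\theta_0}_{L^\infty},k,m,\omega$) that $M\omega(\xi_0)>2\norm{\theta_0}_{L^\infty}$; since $M\omega(\xi)=\theta(x,T)-\theta(y,T)\le 2\norm{\theta_0}_{L^\infty}<M\omega(\xi_0)$ forces $\xi<\xi_0$, the estimate applies and we reach the contradiction.

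The delicate point is making these two estimates balance. Because $\omega$ is necessarily steeper than Lipschitz at the origin (this is \eqref{eq:2.1}), the contribution of the innermost region to $u(x,T)-u(y,T)$ cannot be read off the modulus alone and must be extracted from the cancellation of $K$ (oddness, vanishing spherical averages) together with the $L^\infty$-bound on $\theta$; this is the main technical obstacle. On the other side one needs the sharp dissipation lower bound at the touching point, namely that the $k$-weighted concavity defect of $\omega$ at scale $\xi$ is not overwhelmed by $M\,m(\xi^{-1})\,\omega'(\xi)$ — which is precisely what \eqref{eq:1.5} provides — and the choices of $M$ and of the threshold $\xi_0$ must be made compatibly, since $\xi_0$ depends on $M$ while ruling out large touching distances requires $M\omega(\xi_0)$ to dominate $\norm{\theta_0}_{L^\infty}$.
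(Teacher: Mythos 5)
Your proposal follows the Kiselev--Nazarov--Volberg nonlocal maximum principle: track the first touching time of the modulus, use the first-order conditions at the touching pair $(x,y)$ to reduce the transport term to $M\omega'(\xi)\abs{u(x,T)-u(y,T)}$, and beat it with the concavity-defect lower bound for $\mcal{L}\theta(x,T)-\mcal{L}\theta(y,T)$. The paper does something structurally different: it applies the Constantin--Vicol nonlinear maximum principle to the finite differences $\delta_h\theta$, i.e.\ it works with the quantity $v=(\delta_h\theta/\omega(\abs{h}))^2F(h)$ and the \emph{quadratic} dissipation form $D_h(x,t)=\int\abs{\delta_h\theta(x)-\delta_h\theta(y)}^2\abs{x-y}^{-2}k(x-y)\,dy$, rather than with the linear quantity $\mcal{L}\theta(x)-\mcal{L}\theta(y)$. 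This difference is not cosmetic, and it is exactly where your argument has a genuine gap.

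The gap is the inner-region velocity estimate, which you yourself flag as ``the main technical obstacle'' without resolving it. By \eqref{eq:2.1} the admissible moduli satisfy $\omega(r)/r^{\gamma}\to\infty$, so $\omega$ is strictly super-Lipschitz at the origin; consequently the KNV-type inner bound $\int_0^{\xi}\omega(\rho)m(\rho^{-1})\rho^{-1}\,d\rho$ need not converge (e.g.\ $\omega(r)=(-\log r)^{-\beta}$ with $\beta\le 1$ is admissible under the hypotheses of Theorem~\ref{thm:9} and makes this integral infinite). The oddness/zero-spherical-average of $K$ only lets you replace $\theta(x-z)$ by $\theta(x-z)-\theta(x)$; after that you must pay either the modulus (divergent integral) or the $L^\infty$ bound (which gives $\int_0^{\xi}m(\rho^{-1})\rho^{-1}\,d\rho$, also divergent). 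So the claimed bound $\abs{u(x,T)-u(y,T)}\le CM\omega(\xi)m(\xi^{-1})+C\norm{\theta_0}_{L^\infty}o(1)$ is not obtainable by the means you describe. The paper's resolution is precisely to estimate the inner piece of $\delta_hu$ by Cauchy--Schwarz against the dissipation itself, $\abs{\delta_hu_{\mathrm{in}}}\le c_7\sqrt{D_h\,m(\abs{h}^{-1})^2/k(\abs{h})}$ (equation \eqref{eq:4.6}), and then absorb it into $D_h/4$ by Young's inequality; this mechanism requires having $D_h$ (quadratic in $\theta$) on the good side of the inequality and is unavailable in the touching-point framework, where the dissipation enters only linearly. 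A secondary issue: your final smallness condition carries an extra factor of $M$ (you need $M\,m(\xi^{-1})\omega'(\xi)/k(\xi)$ small), so the threshold $\xi_0$ shrinks as $M$ grows while you simultaneously need $M\omega(\xi_0)>2\norm{\theta_0}_{L^\infty}$; this circularity is resolvable only under additional rate assumptions, whereas the paper's conditions \eqref{eq:4.9}--\eqref{eq:4.11} involve $\norm{\theta_0}_{L^\infty}$ but not $M$, so $M$ can be chosen last without feedback.
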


\begin{proof}
For notational convenience, define $\delta_h f(x) = f(x+h) - f(x)$ for a function $f$. We consider the maximum principle for
\[v(x,h;t) := \left( \frac{\delta_h \theta (x,t)}{\omega(\abs{h})} \right)^2 F(h).\]
The function $F(h) = \exp(-G(|h|))$ is just for the decay of $v$ in $h$, so that $v\in L^p \cap L^\infty(\R^2 \times \R^2)$ for some $1\leq p<\infty$. We set $G$ being smooth, nonnegative, nondecreasing, and $G(r) = 0$ for $0\leq r\leq 1$.

\noindent{\it Step 1}. Evolution of $v$\\
Since
\[\partial_t \theta(x) + u(x) \cdot \nabla \theta(x) + \mathcal{L} \theta(x) = 0,\]
and
\[\partial_t \theta(x+h) + u(x+h) \cdot \nabla \theta(x+h) + \mathcal{L} \theta(x+h) = 0.\]
Subtracting two formulas gives:
\begin{align*}
& \partial_t (\theta(x+h) - \theta(x)) \\
& \qquad + u(x) \cdot \nabla (\theta(x+h) - \theta(x)) + (u(x+h) - u(x)) \cdot \nabla \theta(x+h) + (\mathcal{L}\theta(x+h) - \mathcal{L}\theta(x)) = 0,
\end{align*}
and thus the evolution of $\delta_h \theta$ is given by
\[\big(\partial_t + u \cdot \nabla_x + (\delta_h u) \cdot \nabla_h + \mathcal{L}_x \big) \delta_h \theta = 0.\]
Multiply $\delta_h \theta$ to both sides:
\[\frac{1}{2} \big(\partial_t + u \cdot \nabla_x + (\delta_h u) \cdot \nabla_h + \mathcal{L}_x \big) (\delta_h \theta)^2(x,h,t) + \frac{1}{2} \underbrace{\int_{\R^2} \frac{\abs{\delta_h \theta(x,t) - \delta_h \theta(y,t)}^2}{\abs{x-y}^2} k(x-y) \,d y }_{=: D_h (x,t)} = 0.\]
Therefore
\begin{equation} \label{eq:4.1}
\big(\partial_t + u \cdot \nabla_x + (\delta_h u) \cdot \nabla_h + \mathcal{L}_x \big) v(x,h,t) + D_h(x,t) \frac{F(h)}{\omega(\abs{h})^{2}} = - (\delta_h u) \cdot \frac{h}{\abs{h}} \left( G'(\abs{h}) + \frac{2 \omega'(\abs{h})}{\omega(\abs{h})} \right) v.
\end{equation}

\noindent{\it Step 2}. Breakthrough moment and maximum principle\\
Fix $M>0$, which will be determined later. Suppose $\theta_0$ satisfies a modulus of continuity $\frac{M}{2} \omega$ and $\theta(t)$ admits a modulus of continuity $M\omega$ for $0\leq t\leq t^\ast$. However, at the time $t = t^\ast$,
\begin{equation} \label{eq:4.2}
\abs{\delta_h \theta(x,t^\ast)} \geq M \omega(\abs{h})
\end{equation}
for some $x,h$. Suppose we prove that
\begin{equation} \label{eq:4.3}
(\partial_t + u\cdot \nabla_x + (\delta_h u) \cdot \nabla_h + \mathcal{L}_x)v(x,h,t^\ast) < 0,
\end{equation}
whenever the such event occurs, then we get contraction from the maximum principle of $\mathcal{L}_x$. Hence such an event is impossible, and the modulus of continuity $M\omega(\cdot)$ is conserved.

Therefore it suffices to show that
\[(\partial_t + u\cdot \nabla_x + (\delta_h u) \cdot \nabla_h + \mathcal{L}_x)v(x,h,t^*) < 0,\]
whenever
\[M \omega(\abs{h}) \leq \abs{\delta_h \theta(x,t^*)} \leq 2M \omega(\abs{h}).\]
If \eqref{eq:4.2} is satisfed, 
\[M\omega(\abs{h}) \leq \abs{\delta_h \theta(x,t^*)} \leq 2\norm{\theta_0}_{L^\infty}.\]
Hence, we only need to consider $h$ with $M \omega(\abs{h}) \leq 2 \norm{\theta_0}_{L^\infty}$. Let $r_0 > 0$ be $M\omega(r_0) = 2\norm{\theta_0}_{L^\infty}$. We choose $M$ large so that $r_0 < 1$ and thus $F(h) = 1$, $G'(\abs{h})=0$ for $\abs{h} \leq r_0$.
Due to \eqref{eq:4.1}, it suffices to show that if $M$ is large enough, then
\begin{equation} \label{eq:4.4}
D_h(x,t^*) \geq 2 \frac{\omega'(\abs{h})}{\omega(\abs{h})} \abs{\delta_h u(x,t^*)} \abs{\delta_h \theta(x,t^*)}^2,
\end{equation}
whenever $M \omega(\abs{h}) \leq \abs{\delta_h \theta(x,t^*)} \leq 2M \omega(\abs{h})$. Let us denote $t^*$ by simply $t$ from here.

\noindent{\it Step 3}. Pointwise lower bound of $D_h$\\
A smooth function $\varphi:[0,\infty) \to \R$ is a non-decreasing cutoff function such that
\[\varphi(r) = 0 \text{ on } r\leq \frac{1}{2},\quad \varphi(r) = 1 \text{ on } r \geq 1,\quad 0\leq\varphi' \leq 4.\]
For some sufficiently small $R=R(x,h,t)\geq 6\abs{h}$ which will be determined later, we have the following estimate on $D_h(x,t)$:
\begin{align*}
\frac{D_h(x,t)}{2\pi} & \geq \frac{1}{2\pi} \int_{\R^2} \frac{\abs{\delta_h \theta(x,t) - \delta_h \theta(y,t)}^2}{\abs{x-y}^2} k(x-y) \varphi\Big(\frac{\abs{x-y}}{R}\Big) \,d y \\
& \geq \abs{\delta_h \theta(x,t)}^2 \frac{1}{2\pi} \int_{\abs{x-y} \geq R} \frac{k(x-y)}{\abs{x-y}^2} \,d y - 2 \delta_h \theta(x,t) \cdot \frac{1}{2\pi} \int_{\R^2} \delta_h \theta(y,t) \frac{k(x-y)}{\abs{x-y}^2} \varphi\Big(\frac{\abs{x-y}}{R}\Big) \,d y \\
& \geq \abs{\delta_h \theta(x,t)}^2 \int_R^\infty \frac{k(\rho)}{\rho} \,d \rho \\
& \qquad - 2 \delta_h \theta (x,t) \cdot \frac{1}{2\pi} \int_{\R^2} (\theta(y,t)- \theta(x,t)) \bigg\{ \delta_h \left( \frac{k(\abs{v})}{\abs{v}^2} \varphi \Big(\frac{\abs{v}}{R} \Big) \right)\bigg\}_{v=x-y} \,d y.
\end{align*}
By the mean value theorem,
\begin{align*}
& \abs{\delta_h \left( \frac{k(\abs{v})}{\abs{v}^2} \varphi \Big(\frac{\abs{v}}{R} \Big) \right)}_{v=x-y} \leq \abs{h} \abs{\nabla_v \left( \frac{k(\abs{v})}{\abs{v}^2} \varphi \Big(\frac{\abs{v}}{R} \Big) \right)}_{v=x-y+h^\ast} \\
& \hspace{2cm} \leq \abs{h} \left\{ \frac{4}{R} \frac{k(\rho^\ast)}{(\rho^\ast)^2} \mathbf{1}_{R\geq \rho^\ast\geq \frac{R}{2}} + \left(- \frac{k'(\rho^\ast)}{(\rho^\ast)^2} + \frac{2k(\rho^\ast)}{(\rho^\ast)^3} \right) \mathbf{1}_{\rho^\ast \geq \frac{R}{2}} \right\}_{\rho^\ast = \abs{x-y+h^\ast}}
\end{align*}
Since $\rho^\ast = \abs{x-y+h^\ast} \geq \frac{R}{2}$ and $\abs{h^\ast} \leq \abs{h} \leq \frac{R}{6}$, it follows $\rho = \abs{x-y} \geq \frac{R}{3}$ and $\rho^\ast \geq \rho - \frac{R}{6} \geq \frac{\rho}{2}$. Hence the above formula is less than equal to the same formula where $\rho^\ast$ replaced by $\rho/2$, $R \geq \rho^* \geq \frac{R}{2}$ replaced by $\frac{7R}{6} \geq \rho \geq \frac{R}{3}$, and $\rho^\ast\geq \frac{R}{2}$ replaced by $\rho \geq \frac{R}{3}$.
In addition, regarding the modulus of continuity of $\theta(\cdot,t)$, we have
\[\abs{\theta(y,t)-\theta(x,t)} \leq M \omega(\abs{x-y}).\]
Therefore, wrapping up the estimates with Lemma \ref{lemma:7}, we have the following estimate for $D_h$ with a constant $c_5 (> 2)$.
\begin{align*}
\frac{D_h(x,t)}{2\pi} & \geq \frac{1}{2} \abs{\delta_h\theta(x,t)}^2 k(R) - 2 \abs{\delta_h \theta(x,t)} \abs{h} M \cdot c_5 \frac{\omega(R) k(R)}{R} \\
& \geq \frac{1}{2} \abs{\delta_h\theta(x,t)}^2 k(R) \left( 1 - \frac{4 c_5 \abs{h} M}{\abs{\delta_h \theta(x,t)}} \frac{\omega(R)}{R} \right).
\end{align*}
Set $R = R(x,h,t) > 0$ to satisfy
\[\frac{\omega(R)}{R} = \frac{1}{8c_5 M} \frac{\abs{\delta_h \theta(x,t)}}{\abs{h}}.\]
Since
\[\frac{\omega(R)}{R} = \frac{1}{8c_5 M} \frac{\abs{\delta_h \theta(x,t)}}{\abs{h}} \leq \frac{1}{4c_5} \frac{\omega(\abs{h})}{\abs{h}} < \frac{\omega(6\abs{h})}{6\abs{h}},\]
we get $R \geq 6\abs{h}$ as assumed. On the other hand,
\[\frac{\omega(R)}{R} = \frac{1}{8c_5 M} \frac{\abs{\delta_h \theta(x,t)}}{\abs{h}} \geq \frac{1}{8c_5} \frac{\omega(\abs{h})}{\abs{h}},\]
and thus
\[8c_5 \frac{\abs{h}^{1-\epsilon}}{R^{1-\epsilon}} \geq \frac{\omega(\abs{h}) \abs{h}^{-\epsilon}}{\omega(R) R^{-\epsilon}} \geq 1.\]
Hence $R \leq (8c_5)^{1/(1-\epsilon)} \abs{h}$. Summing up,
\begin{equation} \label{eq:4.5}
D_h(x,t) \geq c k(R) \abs{\delta_h \theta(x,t)}^2 \geq 8c_6 k(\abs{h}) \abs{\delta_h \theta(x,t)}^2.\end{equation}

\noindent{\it Step 4}. Estimate of $\delta_h u$
\[\delta_h u(x,t) = P.V. \int_{\R^2} K(x-y) (\delta_h \theta(x,t) - \delta_h \theta(y,t)) \,d y.\]
We estimate $\nabla u$ by splitting $\R^2$ into two pieces; an inner piece $\abs{x-y} \leq 3\abs{h}$ and an outer piece $\abs{x-y} > 3\abs{h}$.

\noindent For the inner piece, we use the Cauchy-Schwartz inequality:
\begin{equation} \label{eq:4.6}
\abs{\delta_h u_{\text{in}}(x,t)} \leq c_7 \sqrt{D_h(x,t) \frac{m(\abs{h}^{-1})^2}{k(\abs{h})}}.
\end{equation}
For the outer piece:
\begin{align*}
\abs{\delta_h u_{\text{out}}(x,t)} &= \abs{\int_{\abs{x-y} > 3\abs{h}} K(x-y)(\theta(y,t) - \theta(y+h,t)) \,d y } \\
&\leq \abs{\int_{\abs{x-y} > 3\abs{h}} (K(x-y) - K(x-y+h))\theta(y,t) \,d y } \\
&\qquad + \abs{\int_{\substack{\{y:\abs{x-y} > 3\abs{h}\} \triangle \\ \{y:\abs{x-y+h} > 3\abs{h}\} }} K(x-y+h) \theta(y,t) \,d y } \\
\end{align*}
The first term is estimated as follows:
\begin{align*}
&\leq \norm{\theta_0}_{L^\infty} \int_{\abs{x-y} > 3\abs{h}} \abs{h} \abs{\nabla K(x-y+h^\ast)} \,d y \tag{$\abs{h^\ast} \leq \abs{h} < \frac{1}{3}\abs{x-y}$} \\
&\leq c \norm{\theta_0}_{L^\infty} \abs{h} \int_{\abs{x-y} > 3\abs{h}} \abs{\frac{2}{3}(x-y)}^{-3} m\big(\abs{{\textstyle \frac{2}{3}}(x-y)}^{-1} \big) \,d y \\
&\leq c \norm{\theta_0}_{L^\infty} \abs{h} \int_{\abs{h}}^\infty \frac{m\big(\rho^{-1}\big)}{\rho^2} \,d \rho \leq c_3 \norm{\theta_0}_{L^\infty} m\big( \abs{h}^{-1} \big).
\end{align*}
And the second term is estimated as follows:
\begin{align*}
&\leq \norm{\theta_0}_{L^\infty} \int_{2\abs{h} < \abs{x-y+h} \leq 4\abs{h}} \hspace{-1.5cm} \abs{K(x-y+h)} \,d y \leq C \norm{\theta_0}_{L^\infty} \int_{2\abs{h}}^{4\abs{h}} \frac{m(\rho^{-1})}{\rho} \,d \rho \leq C \norm{\theta_0}_{L^\infty} m(\abs{h}^{-1}).
\end{align*}
Therefore,
\begin{equation} \label{eq:4.7}
\abs{\delta_h u_{\text{out}}(x,t)} \leq c_8 \norm{\theta_0}_{L^\infty} m(\abs{h}^{-1}).
\end{equation}

\noindent{\it Step 5}. Proof of \eqref{eq:4.4}\\
By the estimates \eqref{eq:4.5}, \eqref{eq:4.6}, and \eqref{eq:4.7}, it is enough to show that
\begin{equation} \label{eq:4.8}
\frac{D_h}{4} + 2c_6 k(\abs{h}) \abs{\delta_h \theta}^2 \geq \left(c_7 \sqrt{D_h(x,t) \frac{m(\abs{h}^{-1})^2}{k(\abs{h})}} + c_8 \norm{\theta_0}_{L^\infty} m(\abs{h}^{-1}) \right) \frac{\omega'(\abs{h})}{\omega(\abs{h})} \abs{\delta_h \theta}^2.
\end{equation}
By Young's inequality,
\begin{align*}
\text{R.H.S. of \eqref{eq:4.8}} &\leq \frac{D_h}{4} + c_7^2 \frac{m(\abs{h}^{-1})^2}{k(\abs{h})} \left(\frac{\omega'(\abs{h})}{\omega(\abs{h})}\right)^2 \abs{\delta_h \theta}^4 + c_8 \norm{\theta_0}_{L^\infty} m(\abs{h}^{-1}) \left(\frac{\omega'(\abs{h})}{\omega(\abs{h})}\right) \abs{\delta_h \theta}^2.
\end{align*}
Our goal \eqref{eq:4.3} is verified if
\begin{equation} \label{eq:4.9}
c_7^2 \frac{m(\abs{h}^{-1})^2}{k(\abs{h})} \left(\frac{\omega'(\abs{h})}{\omega(\abs{h})}\right)^2 \abs{\delta_h \theta}^2 \leq c_6 k(\abs{h}),
\end{equation}
and
\begin{equation} \label{eq:4.10}
c_8 \norm{\theta_0}_{L^\infty} m(\abs{h}^{-1}) \left(\frac{\omega'(\abs{h})}{\omega(\abs{h})}\right) \leq c_6 k(\abs{h}).
\end{equation}
Since $\abs{\delta_h \theta} \leq 2\norm{\theta_0}_{L^\infty}$, it is enough to show
\[\left( \frac{m(\abs{h}^{-1})}{k(\abs{h})} \frac{\omega'(\abs{h})}{\omega(\abs{h})}\right)^2 \leq \frac{c_6}{4 c_7^2 \norm{\theta_0}_{L^\infty}^2},\]
instead of \eqref{eq:4.9}. And \eqref{eq:4.10} is equivalent to
\begin{equation} \label{eq:4.11}
\frac{m(\abs{h}^{-1})}{k(\abs{h})} \frac{\omega'(\abs{h})}{\omega(\abs{h})} \leq \frac{c_6}{c_8 \norm{\theta_0}_{L^\infty}}.
\end{equation}
The above two inequalities hold for sufficiently small $\abs{h}$ by our assumption on $k, m,$ and $\omega$. It completes the proof that a modulus of continuity $M\omega$ is conserved for sufficiently large $M$.
\end{proof}

\noindent\textbf{Main Theorem 1.} Assume that $\theta_0 \in \mathcal{S}(\R^2)$. Suppose there is a non-decreasing, continuous, concave function $\omega:[0,\infty)\to[0,\infty)$ satisfying $\omega(0)=0$, \eqref{eq:1.4}, \eqref{eq:1.5}, and for some $\epsilon>0$,
\begin{equation*}
\lim_{r\to 0+} \frac{\omega(r)}{r^{\epsilon}} = \infty, \quad \text{and}\quad \lim_{r\to \infty} \frac{\omega(r)}{r^\epsilon} = 0.
\end{equation*}
Then there exists a global smooth solution $\theta$ of the slightly supercritical SQG equation \eqref{eq:1.2}.

\begin{proof}
Fix arbitrary $T>0$, then there exists a bounded weak solution on time $[0,T]$. The initial $\theta_0\in \mathcal{S}(\R^2)$ automatically satisfies the modulus of continuity $\frac{M}{2} \omega$ for sufficiently large $M>0$. Since \eqref{eq:1.5} is satisfied, Theorem \ref{thm:9} implies that $\theta(\cdot,t)$ admits the modulus of continuity $M\omega$ for $t\in [0,T]$. Since \eqref{eq:1.4} is satisfied, by Theorem \ref{thm:8} we have that $\theta$ is smooth. It is clear from the proof that an initial $\theta_0 \in W^{1,\infty}\cap H^{s}(\R^2)$ with $s>1$ is enough to guarantee a smooth solution. To be precise, an initial being Lipschitz ensures there is $M>0$ which $\frac{M}{2} \omega$ is a modulus of continuity of the initial. And $H^s$, $s>1$ regularity is required to show local well-posedness. Lastly, note that it is known that a classical solution instantaneous becomes smooth, for instance see \cite{Don09a}.
\end{proof}

\begin{proof}[Proof of Corollary \ref{cor:1}] Due to Lemma \ref{lemma:5} and \ref{lemma:6}, the equation \eqref{eq:1.7} corresponds to the case
\[k(r) \sim \frac{1}{r(-\log r)^{\alpha_1}}~~\text{for } r\ll1, \quad \text{and}\quad m(\zeta) \sim (\log \abs{\zeta})^{\alpha_2}~~\text{for } \abs{\zeta} \gg 1.\]
Set a function $\omega$ as
\[\omega(r) = \frac{1}{(-\log r)^\beta}, \quad 0<r<1.\]
Then it satisfies the condition
\[\frac{m(r^{-1})}{k(r)} \frac{\omega'(r)}{\omega(r)} \sim \frac{r(-\log r)^{\alpha_1 + \alpha_2}}{r(-\log r)} \to 0\quad\text{as } r\to 0+,\]
if $\alpha_1 + \alpha_2 < 1$. Due to Theorem \ref{thm:9}, there exists $M>0$ so that $\theta(\cdot,t)$ admits
\[\Omega(r) = \min\{M \omega(r), 2\norm{\theta_0}_{L^\infty}\}\]
as a modulus of continuity. Take our $\beta$ to be $\beta > \alpha_1 + \alpha_2$ and then
\[\frac{m(r^{-1}) \Omega(r)}{rk(r)} \sim \Omega(r) (-\log r)^{\alpha_1 + \alpha_2} \to 0,\quad\text{as } r \to 0+.\]
According to Theorem \ref{thm:8}, we obtain that the logarithmically supercritical SQG equation \eqref{eq:1.7} admits a global smooth solution if $\alpha_1 + \alpha_2 < 1$.
\end{proof}

\section{Uniform-in-time Gradient Estimate} \label{sec:5}
In this section, we prove the estimate \eqref{eq:1.9} for the critical SQG equation \eqref{eq:1.8}. Note that the critical SQG equation is a special case in the previous discussion that
\[k(r) = \frac{1}{2\pi r}, \quad m(\zeta) = 1.\]
Main Theorem 2 can be seen as consequence of section 3 and 4. We repeat discussion on section 3 and 4, in the sprit of the critical SQG, which gives a self-contained proof for Main Theorem 2.

\noindent\textbf{Main Theorem 2.}
The critical SQG equation
\begin{equation} \label{eq:A1.7}
\left\{\begin{array}{rl} \partial_t \theta + (u\cdot \nabla) \theta + \Lambda \theta & = 0,\\
u & = \nabla^\perp \Lambda^{-1} \theta,\\
\theta(t=0) &= \theta_0 \end{array}\right.
\end{equation}
with initial $\theta_0 \in \mathcal{S}(\R^2)$ has a unique global smooth solution. In addition, for any $\gamma>1$,
\begin{equation} \label{eq:A1.8}
\norm{\nabla \theta}_{L^\infty_{t,x}} \leq C \norm{\nabla \theta_0}_{L^\infty} \exp(C \norm{\theta_0}_{L^\infty}^{\gamma}),
\end{equation}
where constants depend only on $\gamma$.

A solution $\theta$ to the critical SQG equation exhibits the following rescaling property:
\[\theta_\lambda(x,t) = \theta(\lambda x, \lambda t).\]
Since this rescaling changes $\norm{\nabla\theta}_{L^\infty}$ by $\lambda$ times and remains $\norm{\theta}_{L^\infty}$ the same, we may assume that $\norm{\nabla \theta_0}_{L^\infty} = 1$. In addition, it suffices to show our estimate for $\norm{\theta_0}_{L^\infty} \hspace{-.3em} \geq 1$ as we already have a double exponential estimate.

Let $\omega(r) = (-\log r)^{-\beta}$ for some $\beta > 0$ and let $\Omega(r) = \min\{M \omega(r), 2\norm{\theta_0}_{L^\infty}\}$ with a constant $M$ which will be determined later. We split the proof into two parts:

\begin{proposition} \label{prop:10}
Suppose $\theta$ is a bounded weak solution, i.e., $\theta \in L^\infty([0,T];L^p \cap L^\infty)$ where $1\leq p<\infty$, of the critical SQG equation \eqref{eq:A1.7}. Then there exists constant $M = C_\beta \norm{\theta_0}_{L^\infty}^{1+\beta}$ so that an initial $\theta(\cdot,t)$ admits modulus of continuity $\Omega$ as long as the solution is defined, whenever an initial $\theta_0$ admits $\min\{\frac{M}{2} \omega(r), 2\norm{\theta_0}_{L^\infty}\}$.
\end{proposition}

\begin{proposition} \label{prop:11}
Suppose $\theta$ is a bounded weak solution of the critical SQG equation \eqref{eq:A1.7}. If $\theta(\cdot,t)$ admits a modulus of continuity $\Omega$ for $t\in [0,T]$, then
\[\norm{\nabla \theta}_{L^\infty_{t,x}} \leq C \norm{\nabla \theta_0}_{L^\infty} \exp(C M^{\frac{1}{\beta}}),\]
where constants depend only on $\beta$. Thus $\theta$ is a smooth solution on $(0,T]$.
\end{proposition}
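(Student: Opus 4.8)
The plan is to re-run the nonlinear maximum principle of Theorem \ref{thm:7} in the critical case $k(r)=\frac{1}{2\pi r}$, $m\equiv 1$, this time keeping explicit track of how the threshold for $\abs{\grad\theta}$ scales with $M$ and $\beta$. As there, it suffices to treat a smooth solution (regularize \eqref{eq:A1.7} by $-\epsilon\lap\theta$, get the bound uniformly in $\epsilon$, then let $\epsilon\to0+$). Differentiating the equation, pairing with $\grad\theta$, and writing $\Lambda$ as a singular integral gives
\[
\tfrac12(\del_t+u\cdot\grad+\Lambda)\abs{\grad\theta}^2+\tfrac12 D(x,t)=-\grad\theta\cdot\grad u\cdot\grad\theta,\qquad D(x,t)=\tfrac{1}{2\pi}\int_{\R^2}\frac{\abs{\grad\theta(x,t)-\grad\theta(y,t)}^2}{\abs{x-y}^{3}}\intd y.
\]
Since $\Lambda\phi(x_0)\ge0$ at a spatial maximum of $\phi$, and $\grad\theta(\cdot,t)$ decays because $\theta_0\in\mcal S(\R^2)$, it is enough to show that $(\del_t+u\cdot\grad+\Lambda)\abs{\grad\theta}^2(x,t)<0$ wherever $\abs{\grad\theta(x,t)}$ exceeds a threshold we compute below; that sign condition then propagates the $L^\infty$ bound on $\grad\theta$ in time.

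I would then copy Steps 2--4 of Theorem \ref{thm:7} almost verbatim. With $k(r)=\frac1{2\pi r}$, inserting the cutoff $\varphi$, choosing $R=R(x,t)$ via $\Omega(R)/R=\abs{\grad\theta(x,t)}/c_1$, and applying Lemma \ref{lemma:6}(i)--(iii) yields $D(x,t)\ge 4c_2\abs{\grad\theta}^2k(R)$ as in \eqref{eq:3.3}. For $\grad u=\grad^\perp\Lambda^{-1}\grad\theta$, the inner/outer split at radius $r$ together with Lemma \ref{lemma:4} (with $m\equiv1$) and Lemma \ref{lemma:6}(iv)--(v) --- part (iv) being trivial since $\frac{m(\rho^{-1})^2}{\rho k(\rho)}\equiv2\pi$ --- gives the analogue of \eqref{eq:3.4}, namely $\abs{\grad u}\abs{\grad\theta}^2\le\frac D4+c_3^2\frac{\abs{\grad\theta}^4}{k(r)}+c_4\frac{\Omega(r)}{r}\abs{\grad\theta}^2$, with $c_1,\dots,c_4$ numeric constants that are moreover independent of $\beta$ once one fixes, say, $\gamma=\tfrac12$ in Lemma \ref{lemma:6}. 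Setting $r=R$ and using $\abs{\grad\theta}=c_1\Omega(R)/R$ and $k(R)=\frac1{2\pi R}$, conditions \eqref{eq:3.5} and \eqref{eq:3.6} collapse to the requirement that $\Omega(R)$ be below a fixed numeric constant; let $r_1\in(0,1)$ be defined by $\Omega(r_1)=\min\{\frac{c_2^{1/2}}{2^{1/2}c_1c_3},\frac{c_2}{2c_4},1\}=:2(c')^{-1}$, which is admissible since $2(c')^{-1}\le1\le\norm{\theta_0}_{L^\infty}$ places us in the regime $\Omega=M\omega$. Because $\Omega$ is increasing on $(0,1)$ while $R\mapsto\Omega(R)/R$ is decreasing (concavity, $\Omega(0)=0$), the sign condition holds exactly when $R<r_1$, i.e.\ when $\abs{\grad\theta(x,t)}>c_1\Omega(r_1)/r_1$, and the maximum principle for $\Lambda$ gives $\norm{\grad\theta(\cdot,t)}_{L^\infty}\le\max\{\norm{\grad\theta_0}_{L^\infty},c_1\Omega(r_1)/r_1\}$ for all $t\in[0,T]$.

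It then remains to compute $r_1$. From $M(-\log r_1)^{-\beta}=2(c')^{-1}$ one gets $-\log r_1=(Mc'/2)^{1/\beta}$, hence $r_1^{-1}=\exp\bigl((c'/2)^{1/\beta}M^{1/\beta}\bigr)$ and $c_1\Omega(r_1)/r_1\le 2c_1\exp(C_\beta M^{1/\beta})$ with $C_\beta=(c'/2)^{1/\beta}$. Combined with the normalization $\norm{\grad\theta_0}_{L^\infty}=1$ coming from the scaling $\theta(x,t)\mapsto\theta(\lambda x,\lambda t)$, this is exactly $\norm{\grad\theta}_{L^\infty_{t,x}}\le C\norm{\grad\theta_0}_{L^\infty}\exp(CM^{1/\beta})$; feeding in $M=C_\beta\norm{\theta_0}_{L^\infty}^{1+\beta}$ from Proposition \ref{prop:10} and putting $\gamma=1/\beta$ recovers \eqref{eq:A1.8}. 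Finally, the uniform gradient bound makes $\norm{\grad\theta}_{L^1_tL^\infty_x}\le T\norm{\grad\theta}_{L^\infty_{t,x}}$ finite, so by the local well-posedness theory $\theta$ is a smooth solution on $(0,T]$. The whole argument is a specialization of Theorem \ref{thm:7}; the one delicate point is the bookkeeping --- one must verify that every $c_i$ can be taken $\beta$-independent, so that all the $\beta$-dependence is captured by the single factor $(c'/2)^{1/\beta}$ in the exponent --- rather than any new analytic input.
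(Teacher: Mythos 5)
Your proposal is correct and follows essentially the same route as the paper: specialize the nonlinear maximum principle argument of Theorem \ref{thm:7} to $k(r)=\frac{1}{2\pi r}$, $m\equiv 1$, choose $R$ by $\Omega(R)/R=\abs{\grad\theta}/c_1$, reduce the sign condition to $\Omega(R)$ being below a numeric threshold, and convert the resulting bound $\abs{\grad\theta}\le c_1\Omega(r_1)/r_1$ into the exponential via $-\log r_1=(Mc'/2)^{1/\beta}$. Your explicit remark that the constants $c_1,\dots,c_4$ (and hence $c'$) must be checked to be $\beta$-independent is a point the paper leaves implicit, but it is bookkeeping rather than a deviation in method.
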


\begin{proof}[Proof of Proposition \ref{prop:10}]
Let $r_0 \in (0,1)$ be $M\omega(r_0) = 2\norm{\theta}_{L^\infty}$. Since $L^\infty$-norm is conserved, it suffices to show the modulus of continuity for distance less than $r_0$.

For notational convenience, define $\delta_h f(x) = f(x+h) - f(x)$ for a function $f$. We consider the maximum principle for
\[v(x,h;t) := \left( \frac{\delta_h \theta (x,t)}{\omega(\abs{h})} \right)^2 F(h).\]
The function $F(h) = \exp(-G(|h|))$ is just for the decay of $v$ in $h$, so that $v\in L^p \cap L^\infty(\R^2 \times \R^2)$ for some $1\leq p<\infty$. We set $G$ being smooth, nonnegative, nondecreasing, and $G(r) = 0$ for $0\leq r\leq 1$.

\noindent{\it Step 1}. Evolution of $v$\\
Since
\[\partial_t \theta(x) + u(x) \cdot \nabla \theta(x) + \Lambda \theta(x) = 0,\]
and
\[\partial_t \theta(x+h) + u(x+h) \cdot \nabla \theta(x+h) + \Lambda \theta(x+h) = 0.\]
Subtracting two formulas gives:
\begin{align*}
& \partial_t (\theta(x+h) - \theta(x)) \\
& \qquad + u(x) \cdot \nabla (\theta(x+h) - \theta(x)) + (u(x+h) - u(x)) \cdot \nabla \theta(x+h) + (\Lambda \theta(x+h) - \Lambda \theta(x)) = 0,
\end{align*}
and thus the evolution of $\delta_h \theta$ is given by
\[\big(\partial_t + u \cdot \nabla_x + (\delta_h u) \cdot \nabla_h + \Lambda_x \big) \delta_h \theta = 0.\]
Multiply $\delta_h \theta$ to both sides:
\[\frac{1}{2} \big(\partial_t + u \cdot \nabla_x + (\delta_h u) \cdot \nabla_h + \Lambda_x \big) (\delta_h \theta)^2(x,h,t) + \frac{1}{2} \underbrace{ \frac{1}{2\pi} \int_{\R^2} \frac{\abs{\delta_h \theta(x,t) - \delta_h \theta(y,t)}^2}{\abs{x-y}^3} \,d y }_{=: D_h (x,t)} = 0.\]
Therefore
\begin{equation} \label{eq:A4.1}
\big(\partial_t + u \cdot \nabla_x + (\delta_h u) \cdot \nabla_h + \Lambda_x \big) v(x,h,t) + D_h(x,t) \frac{F(h)}{\omega(\abs{h})^{2}} = - (\delta_h u) \cdot \frac{h}{\abs{h}} \left( G'(\abs{h}) + \frac{2 \omega'(\abs{h})}{\omega(\abs{h})} \right) v.
\end{equation}

\noindent{\it Step 2}. Breakthrough moment and maximum principle\\
Fix $M>0$, which will be determined later. Suppose $\theta_0$ satisfies a modulus of continuity $\frac{M}{2} \omega$ and $\theta(t)$ admits a modulus of continuity $M\omega$ for $0\leq t\leq t^\ast$. However, at the time $t = t^\ast$,
\begin{equation} \label{eq:A4.2}
\abs{\delta_h \theta(x,t^\ast)} \geq M \omega(\abs{h})
\end{equation}
for some $x,h$. Suppose we prove that
\begin{equation} \label{eq:A4.3}
(\partial_t + u\cdot \nabla_x + (\delta_h u) \cdot \nabla_h + \Lambda_x)v(x,h,t^\ast) < 0,
\end{equation}
whenever the such event occurs, then we get contraction from the maximum principle of $\Lambda_x$. Hence such an event is impossible, and the modulus of continuity $M\omega(\cdot)$ is conserved.

Therefore it suffices to show that
\[(\partial_t + u\cdot \nabla_x + (\delta_h u) \cdot \nabla_h + \Lambda_x)v(x,h,t^*) < 0,\]
whenever
\[M \omega(\abs{h}) \leq \abs{\delta_h \theta(x,t^*)} \leq 2M \omega(\abs{h}).\]
If \eqref{eq:A4.2} is satisfed, 
\[M\omega(\abs{h}) \leq \abs{\delta_h \theta(x,t^*)} \leq 2\norm{\theta_0}_{L^\infty}.\]
Hence, we only need to consider $\abs{h} \leq r_0$.
Due to \eqref{eq:A4.1}, it suffices to show that
\begin{equation} \label{eq:A4.4}
\frac{D_h(x,t^*)}{2\beta} \geq \frac{1}{\abs{h}(-\log\abs{h})} \abs{\delta_h u(x,t^*)} \abs{\delta_h \theta(x,t^*)}^2,
\end{equation}
whenever $M \omega(\abs{h}) \leq \abs{\delta_h \theta(x,t^*)} \leq 2M \omega(\abs{h})$, $\abs{h} \leq r_0$. Let us denote $t^*$ by simply $t$ from here.

\noindent{\it Step 3}. Pointwise lower bound of $D_h$\\
A smooth function $\varphi:[0,\infty) \to \R$ is a non-decreasing cutoff function such that
\[\varphi(r) = 0 \text{ on } r\leq \frac{1}{2},\quad \varphi(r) = 1 \text{ on } r \geq 1,\quad 0\leq\varphi' \leq 4.\]
For some sufficiently small $R=R(x,h,t)\geq 6\abs{h}$ which will be determined later, we have the following estimate on $D_h(x,t)$:
\begin{align*}
D_h(x,t) & \geq \frac{1}{2\pi} \int_{\R^2} \frac{\abs{\delta_h \theta(x,t) - \delta_h \theta(y,t)}^2}{\abs{x-y}^3} \varphi\Big(\frac{\abs{x-y}}{R}\Big) \,d y \\
& \geq \abs{\delta_h \theta(x,t)}^2 \frac{1}{2\pi} \int_{\abs{x-y} \geq R} \frac{1}{\abs{x-y}^3} \,d y - 2 \delta_h \theta(x,t) \cdot \frac{1}{2\pi} \int_{\R^2} \delta_h \theta(y,t) \frac{1}{\abs{x-y}^3} \varphi\Big(\frac{\abs{x-y}}{R}\Big) \,d y \\
& \geq \abs{\delta_h \theta(x,t)}^2 \int_R^\infty \frac{1}{\rho^2} \,d \rho \\
& \qquad - 2 \delta_h \theta (x,t) \cdot \frac{1}{2\pi} \int_{\R^2} (\theta(y,t)- \theta(x,t)) \bigg\{ \delta_h \left( \frac{1}{\abs{v}^3} \varphi \Big(\frac{\abs{v}}{R} \Big) \right)\bigg\}_{v=x-y} \,d y.
\end{align*}
By the mean value theorem,
\begin{align*}
& \abs{\delta_h \left( \frac{1}{\abs{v}^3} \varphi \Big(\frac{\abs{v}}{R} \Big) \right)}_{v=x-y} \leq \abs{h} \abs{\nabla_v \left( \frac{1}{\abs{v}^3} \varphi \Big(\frac{\abs{v}}{R} \Big) \right)}_{v=x-y+h^\ast} \\
& \hspace{2cm} \leq \abs{h} \left\{ \frac{4}{R} \frac{1}{(\rho^\ast)^3} \mathbf{1}_{R\geq \rho^\ast\geq \frac{R}{2}} + \frac{3}{(\rho^*)^4} \mathbf{1}_{\rho^\ast \geq \frac{R}{2}} \right\}_{\rho^\ast = \abs{x-y+h^\ast}}
\end{align*}
Since $\rho^\ast = \abs{x-y+h^\ast} \geq \frac{R}{2}$ and $\abs{h^\ast} \leq \abs{h} \leq \frac{R}{6}$, it follows $\rho = \abs{x-y} \geq \frac{R}{3}$ and $\rho^\ast \geq \rho - \frac{R}{6} \geq \frac{\rho}{2}$. Hence the above formula is less than equal to the same formula where $\rho^\ast$ replaced by $\rho/2$, $R \geq \rho^* \geq \frac{R}{2}$ replaced by $\frac{7R}{6} \geq \rho \geq \frac{R}{3}$, and $\rho^\ast\geq \frac{R}{2}$ replaced by $\rho \geq \frac{R}{3}$.
In addition, regarding the modulus of continuity of $\theta(\cdot,t)$, we have
\[\abs{\theta(y,t)-\theta(x,t)} \leq M \omega(\abs{x-y}).\]
Therefore, we have the following estimate for $D_h$ with a constant $c_5 (> 3)$.
\begin{align*}
D_h(x,t)& \geq \frac{\abs{\delta_h\theta(x,t)}^2}{R} - 2 \abs{\delta_h \theta(x,t)} \abs{h} M \cdot c_5 \frac{\omega(R)}{R^2} \\
& \geq \frac{\abs{\delta_h\theta(x,t)}^2}{R} \left( 1 - \frac{2 c_5 \abs{h} M}{\abs{\delta_h \theta(x,t)}} \frac{\omega(R)}{R} \right).
\end{align*}
Set $R = R(x,h,t) > 0$ to satisfy
\[\frac{\omega(R)}{R} = \frac{1}{4c_5 M} \frac{\abs{\delta_h \theta(x,t)}}{\abs{h}}.\]
Since
\[\frac{\omega(R)}{R} = \frac{1}{4c_5 M} \frac{\abs{\delta_h \theta(x,t)}}{\abs{h}} \leq \frac{1}{2c_5} \frac{\omega(\abs{h})}{\abs{h}} < \frac{\omega(6\abs{h})}{6\abs{h}},\]
we get $R \geq 6\abs{h}$ as assumed. On the other hand,
\[\frac{\omega(R)}{R} = \frac{1}{4c_5 M} \frac{\abs{\delta_h \theta(x,t)}}{\abs{h}} \geq \frac{1}{4c_5} \frac{\omega(\abs{h})}{\abs{h}},\]
and thus
\[4c_5 \frac{\abs{h}^{1-\gamma}}{R^{1-\gamma}} \geq \frac{\omega(\abs{h}) \abs{h}^{-\gamma}}{\omega(R) R^{-\gamma}} \geq 1.\]
Hence $R \leq (4c_5)^{1/(1-\gamma)} \abs{h}$. Summing up,
\begin{equation} \label{eq:A4.5}
D_h(x,t) \geq \frac{\abs{\delta_h \theta(x,t)}^2}{2R} \geq 8c_6 \frac{\abs{\delta_h \theta(x,t)}^2}{\abs{h}}.\end{equation}

\noindent{\it Step 4}. Estimate of $\delta_h u$
\[\delta_h u(x,t) = P.V. \int_{\R^2} \frac{(x-y)^\perp}{\abs{x-y}^3} (\delta_h \theta(x,t) - \delta_h \theta(y,t)) \,d y.\]
Name the kernel by $K(z) = z^\perp/\abs{z}^3$. We estimate $\nabla u$ by splitting $\R^2$ into two pieces; an inner piece $\abs{x-y} \leq 3\abs{h}$ and an outer piece $\abs{x-y} > 3\abs{h}$.

\noindent For the inner piece, we use the Cauchy-Schwartz inequality:
\begin{align} \label{eq:A4.6} \begin{split}
\abs{\delta_h u_{\text{in}}(x,t)} & \leq C \sqrt{\bigg(\int_{\abs{x-y} \leq 3\abs{h}} \frac{\abs{\delta_h\theta(x,t) - \delta_h\theta(y,t)}^2}{\abs{x-y}^3} \,d y \bigg)\bigg( \int_{\abs{x-y} \leq 3\abs{h}} \frac{1}{\abs{x-y}} \,d y\bigg)} \\
&= c_7 \sqrt{D_h(x,t) \abs{h}}.
\end{split}\end{align}
For the outer piece:
\begin{align*}
\abs{\delta_h u_{\text{out}}(x,t)} &= \abs{\int_{\abs{x-y} > 3\abs{h}} K(x-y)(\theta(y,t) - \theta(y+h,t)) \,d y } \\
&\leq \abs{\int_{\abs{x-y} > 3\abs{h}} (K(x-y) - K(x-y+h))\theta(y,t) \,d y } \\
&\qquad + \abs{\int_{\substack{\{y:\abs{x-y} > 3\abs{h}\} \triangle \\ \{y:\abs{x-y+h} > 3\abs{h}\} }} K(x-y+h) \theta(y,t) \,d y } \\
\end{align*}
The first term is estimated as follows:
\begin{align*}
&\leq \norm{\theta_0}_{L^\infty} \int_{\abs{x-y} > 3\abs{h}} \abs{h} \abs{\nabla K(x-y+h^\ast)} \,d y \tag{$\abs{h^\ast} \leq \abs{h} < \frac{1}{3}\abs{x-y}$} \\
&\leq c \norm{\theta_0}_{L^\infty} \abs{h} \int_{\abs{x-y} > 3\abs{h}} \abs{x-y}^{-3} \,d y = C \norm{\theta_0}_{L^\infty}.
\end{align*}
And the second term is estimated as follows:
\begin{align*}
&\leq \norm{\theta_0}_{L^\infty} \int_{2\abs{h} < \abs{x-y+h} \leq 4\abs{h}} \hspace{-1.5cm} \abs{K(x-y+h)} \,d y \leq C \norm{\theta_0}_{L^\infty} \int_{2\abs{h}}^{4\abs{h}} \frac{1}{\rho} \,d \rho \leq C \norm{\theta_0}_{L^\infty}.
\end{align*}
Therefore,
\begin{equation} \label{eq:A4.7}
\abs{\delta_h u_{\text{out}}(x,t)} \leq c_8 \norm{\theta_0}_{L^\infty}.
\end{equation}
\noindent{\it Step 5}. Proof of \eqref{eq:A4.4}\\
By the estimates \eqref{eq:A4.5}, \eqref{eq:A4.6}, and \eqref{eq:A4.7}, it is enough to show that
\begin{equation} \label{eq:A4.8}
\frac{D_h}{4\beta} + \frac{2c_6}{\beta} \frac{\abs{\delta_h \theta}^2}{\abs{h}} \geq \left(c_7 \sqrt{D_h(x,t) \abs{h}} + c_8 \norm{\theta_0}_{L^\infty} \right) \frac{1}{\abs{h}(-\log\abs{h})} \abs{\delta_h \theta}^2.
\end{equation}
By Young's inequality,
\begin{align*}
\text{R.H.S. of \eqref{eq:A4.8}} &\leq \frac{D_h}{4\beta} + \beta c_7^2 \abs{h} \left(\frac{1}{\abs{h}(-\log\abs{h})}\right)^2 \abs{\delta_h \theta}^4 + c_8 \norm{\theta_0}_{L^\infty} \left(\frac{1}{\abs{h}(-\log\abs{h})}\right) \abs{\delta_h \theta}^2.
\end{align*}
If we verify that
\begin{equation} \label{eq:A4.9}
\beta c_7  \abs{\delta_h \theta} \left(-\log\abs{h}\right)^{-1} \leq c_6,
\end{equation}
and
\begin{equation} \label{eq:A4.10}
\beta c_8 \norm{\theta_0}_{L^\infty} (-\log \abs{h})^{-1} \leq c_6,
\end{equation}
then \eqref{eq:A4.3} holds. Since $\abs{\delta_h \theta} \leq 2\norm{\theta_0}_{L^\infty}$, it suffices to show
\[-\log \abs{h} \geq \beta c_9 \norm{\theta_0}_{L^\infty}, \quad\text{for all } \abs{h} \leq r_0,\]
instead of \eqref{eq:A4.9} and \eqref{eq:A4.10}. We completes the proof by setting $M = 2(\beta c_9)^\beta \norm{\theta_0}_{L^\infty}^{1+\beta}$.
\end{proof}

\begin{proof}[Proof of Proposition \ref{prop:11}]
It suffices to prove the assertion for a smooth solution $\theta$. One may consider regularized equation with the term $-\epsilon \delta \theta$, get a uniform estimate independent of $\epsilon$, and then take the inviscid limit $\epsilon\to 0+$ to get the desired result.

\noindent{\it Step 1}. Evolution of $\abs{\nabla \theta}^2$\\
We take gradient on both sides of the equation to get:
\[\partial_t \nabla \theta + u \cdot \nabla^2 \theta + \nabla u \cdot \nabla \theta + \Lambda \nabla \theta = 0.\]
Multiply $\nabla\theta$ to both sides:
\[\nabla\theta\cdot \partial_t \nabla \theta + \nabla \theta \cdot u \cdot \nabla^2 \theta + \nabla \theta \cdot \nabla u \cdot \nabla \theta + \nabla \theta \cdot \Lambda \nabla \theta = 0.\]
Considering singular integral formulation for the nonlocal operator $\Lambda$ gives:
\begin{equation} \label{eq:A3.1}
\frac{1}{2} (\partial_t + u \cdot \nabla + \Lambda) \abs{\nabla\theta}^2 + \frac{1}{2}\underbrace{\frac{1}{2\pi}\int_{\R^2} \frac{\abs{\nabla \theta(x,t) - \nabla\theta(y,t)}^2}{\abs{x-y}^{3}} \,d y}_{=: D(x,t)} = -\nabla \theta \cdot \nabla u \cdot \nabla \theta.
\end{equation}
Our goal here is to prove
\[(\partial_t + u\cdot \nabla + \Lambda)\abs{\nabla\theta}^2(x,t) < 0\]
whenever $\abs{\nabla \theta(x,t)}$ is sufficiently large in terms of $M$.

\noindent{\it Step 2}. Pointwise lower bound of $D(x,t)$\\
A smooth function $\varphi:[0,\infty) \to \R$ is a non-decreasing cutoff function such that
\[\varphi(x) = 0 \text{ on } x\leq \frac{1}{2},\quad \varphi(x) = 1 \text{ on } x \geq 1,\quad 0\leq\varphi' \leq 4.\]
For some sufficiently small $R=R(x,t)>0$ which will be determined later, we have the following estimate on $D(x,t)$:
\begin{align*}
D(x,t) & \geq \frac{1}{2\pi} \int_{\R^2} \frac{\abs{\nabla \theta(x,t) - \nabla \theta(y,t)}^2}{\abs{x-y}^3}\varphi\Big(\frac{\abs{x-y}}{R}\Big) \,d y \\
& \geq \abs{\nabla\theta(x,t)}^2 \frac{1}{2\pi} \int_{\abs{x-y} \geq R} \frac{1}{\abs{x-y}^3} \,d y - 2 \nabla \theta(x,t) \cdot \frac{1}{2\pi} \int_{\R^2} \nabla \theta(y,t) \frac{1}{\abs{x-y}^3} \varphi\Big(\frac{\abs{x-y}}{R}\Big) \,d y \\
& \geq \abs{\nabla\theta(x,t)}^2 \int_R^\infty \frac{1}{\rho^2} \,d \rho - 2 \nabla \theta (x,t) \cdot \frac{1}{2\pi} \int_{\R^2} (\theta(y,t)-\theta(x,t)) \left\{ \nabla_v \left( \frac{1}{\abs{v}^3} \varphi \Big(\frac{\abs{v}}{R} \Big) \right)\right\}_{v=x-y}  \,d y \\
& \geq \frac{\abs{\nabla\theta(x,t)}^2}{\rho} - 2 \abs{ \nabla \theta (x,t)} \int_0^\infty \rho \Omega(\rho) \abs{ \frac{d}{d\rho} \Big(\frac{\varphi(\rho/R)}{\rho^3} \Big)} \,d \rho.
\end{align*}
The second term:
\[\int_0^\infty \rho \Omega(\rho) \abs{ \frac{d}{d\rho} \Big(\frac{\varphi(\rho/R)}{\rho^3} \Big)} \,d \rho \leq \frac{4}{R} \int_{R/2}^{R} \frac{\Omega(\rho)}{\rho^2} \,d\rho + 2\int_{R/2}^\infty \frac{\Omega(\rho)}{\rho^3} \,d\rho \leq \frac{c_1}{4} \frac{\Omega(R)}{R^2}.\]
Therefore,
\[D(x,t) \geq \frac{\abs{\nabla\theta(x,t)}^2}{R} - \abs{ \nabla \theta (x,t)} \frac{c_1 \Omega(R)}{2R^2} = \frac{\abs{\nabla\theta(x,t)}^2}{R} \left( 1 - \frac{c_1}{2\abs{\nabla\theta(x,t)}} \frac{\Omega(R)}{R} \right).\]
Set $R = R(x,t) >0$ to satisfy
\begin{equation} \label{eq:A3.2}
\frac{\Omega(R)}{R} = \frac{\abs{\nabla\theta(x,t)}}{c_1}.
\end{equation}
Then
\begin{equation} \label{eq:A3.3}
D(x,t) \geq \frac{\abs{\nabla\theta (x,t)}^2}{2R}.
\end{equation}
Note that $R(x,t)$ is arbitrarily small when $\abs{\nabla\theta(x,t)}$ is large enough.

\noindent{\it Step 3}. Estimate of $\nabla u$
\[\nabla u(x,t) = \nabla^\perp \Lambda^{-1} \nabla \theta(x,t) = P.V. \int_{\R^2} K(x-y) (\nabla \theta(x,t) - \nabla \theta(y,t)) \,d y,\]
where $K(z) = z^\perp / \abs{z}^3$.
We estimate $\nabla u$ by splitting $\R^2$ into two pieces; an inner piece $\abs{x-y} \leq R$ and an outer piece $\abs{x-y} > R$.

\noindent For the inner piece, we use the Cauchy-Schwartz inequality:
\begin{align*}
\abs{\nabla u_\text{in} (x,t)} & \leq C \sqrt{\bigg(\int_{\abs{x-y} \leq R} \frac{\abs{\nabla\theta(x,t) - \nabla\theta(y,t)}^2}{\abs{x-y}^3} \,d y \bigg)\bigg( \int_{\abs{x-y} \leq R} \frac{1}{\abs{x-y}} \,d y\bigg)} \\
& = c_3 \sqrt{D(x,t) R}.
\end{align*}
For the outer piece, we apply integration by parts and use the modulus of continuity of $\theta$:
\begin{align*}
\abs{\nabla u_\text{out}(x,t)} &\leq \abs{\int_{\abs{x-y} > R} \big(\theta(x,t) - \theta(y,t)\big) \nabla K(x-y) \,d y} \\
& \quad + \abs{\int_{\abs{x-y} = R} \big(\theta(x,t) - \theta(y,t)\big) K(x-y) \nu(y) \,d \sigma(y)} \\
&\leq C \int_{R}^\infty \frac{\Omega(\rho)}{\rho^2} \,d\rho + C \frac{\Omega(R)}{R} \leq c_4\frac{\Omega(R)}{R}.
\end{align*}
Therefore,
\begin{align} \label{eq:A3.4}
\begin{aligned}
\abs{\nabla u} \abs{\nabla \theta}^2 & \leq \abs{\nabla u_\text{in}} \abs{\nabla \theta}^2 + \abs{\nabla u_\text{out}} \abs{\nabla \theta}^2 \\
& \leq \frac{D}{4} + c_3^2  R \abs{\nabla \theta}^4 + c_4 \frac{\Omega(R)}{R} \abs{\nabla \theta}^2.
\end{aligned}
\end{align}

\noindent{\it Step 4}. Maximum principle\\
Combining \eqref{eq:A3.1}, \eqref{eq:A3.3}, and \eqref{eq:A3.4},
\begin{align*}
& (\partial_t + u \cdot \nabla + \Lambda) \abs{\nabla\theta}^2 + \frac{D}{2} + \frac{\abs{\nabla\theta}^2}{2R} \\
\leq &\, (\partial_t + u \cdot \nabla + \Lambda) \abs{\nabla\theta}^2 + D \\
\leq &\, 2\abs{\nabla u} \abs{\nabla \theta}^2 \leq \frac{D}{2} + 2c_3^2 R \abs{\nabla \theta}^4 + 2c_4 \frac{\Omega(R)}{R} \abs{\nabla\theta}^2.
\end{align*}
Hence
\[(\partial_t + u \cdot \nabla + \Lambda) \abs{\nabla\theta}^2 + \frac{\abs{\nabla\theta}^2}{2R} \leq 2c_3^2 R \abs{\nabla \theta}^4 + 2c_4 \frac{\Omega(R)}{R} \abs{\nabla\theta}^2.\]
Recall that \eqref{eq:A3.2} that
\[\abs{\nabla \theta} = c_1 \frac{\Omega(R)}{R}.\]
And let $r_1 \in (0,1)$ be 
\[M (-\log r_1)^{-\beta} = \Omega(r_1) = \min\Big\{\frac{1}{8^{1/2}c_1c_3}, \frac{1}{8c_4}, 1\Big\} =: c'.\]
If $\abs{\nabla \theta} > c_1 \frac{\Omega(r_1)}{r_1}$, then $R < r_1$,
\begin{equation*}
2c_3^2 R \bigg( c_1 \frac{\Omega(R)}{R} \bigg)^2 < \frac{1}{4R} ,\quad\text{and} \quad 2c_4 \frac{\Omega(R)}{R} < \frac{1}{4R}.
\end{equation*}
Hence,
\[(\partial_t + u\cdot \nabla + \Lambda) \abs{\nabla \theta}^2(x,t) < 0.\]
Due to the maximum principle,
\[\norm{\nabla \theta}_{L^\infty_{t,x}} \leq \max\Big\{c_1 \frac{\Omega(r_1)}{r_1} ,1\Big\} \leq \frac{C}{r_1} \leq C \exp \big( (c')^{-\frac{1}{\beta}} M^{\frac{1}{\beta}} \big). \qedhere\]
\end{proof}
\begin{proof}[Proof of Main Theorem 2] By setting $M = C\norm{\theta_0}_{L^\infty}^{1+\beta}$ for some large constant $C = C_\beta > 0$, we get \eqref{eq:A1.8} due to Proposition \ref{prop:10} and \ref{prop:11}:
\[\norm{\nabla \theta}_{L^\infty_{t,x}} \leq C \norm{\nabla\theta_0}_{L^\infty} \exp(C \norm{\theta_0}_{L^\infty}^{1+\frac{1}{\beta}}). \qedhere\]
\end{proof}

\begin{remark}
Since there is no control for constants as $\beta^{-1} \to 0$, we do not know how to improve it to $C\exp(C \norm{\theta_0}_{L^\infty})$.
\end{remark}

\begin{corollary} \label{cor:12}
Assume that $\theta_0 \in \mathcal{S}(\R^2)$. If
\[\alpha_1 + \alpha_2 < 1,\]
then there exists a global smooth solution $\theta$ of
\begin{equation} \label{eq:5.17}
\left\{\begin{array}{rl} \partial_t \theta + (u\cdot \nabla) \theta + \Lambda (\log^{-\alpha_1} \hspace{-.5ex} \Lambda) \theta & = 0, \\ u & = \nabla^\perp \Lambda^{-1} (\log^{\alpha_2} \hspace{-.3ex} \Lambda) \theta, \\ \theta(t=0) & = \theta_0. \end{array}\right.
\end{equation}
Moreover, the solution exhibits the following gradient estimate if $\norm{\nabla \theta_0}_{L^\infty} > 3 \norm{\theta_0}_{L^\infty}$:
\[\norm{\nabla \theta}_{L^\infty_{t,x}} \leq C \norm{\nabla \theta_0}_{L^\infty} \exp(C \norm{\theta_0}_{L^\infty}^{\gamma}),\]
where $\gamma > \frac{1}{1-\alpha_1 - \alpha_2}$ and the constants depend only on $\gamma$, $\alpha_1$, and $\alpha_2$.
\end{corollary}

\begin{proof}[Proof of Corollary \ref{cor:12}]
For a fixed $\beta > 0$, let $\omega(r) = (-\log r)^{-\beta}$ for $0<r<1$,\\
$\displaystyle A = A(\theta_0) = \frac{2\norm{\theta_0}_{L^\infty}}{\norm{\nabla \theta_0}_{L^\infty}}$ and set $\Omega(r) = \min\big\{M\omega (r/A), 2\norm{\theta_0}_{L^\infty}\big\}$ with a constant $M>0$ which will be determined later. Note that it suffices to show the case $A$ is smaller than a fixed constant $C_0$. Define $r_0 > 0$ to be
\[M\omega(r_0) = 2 \norm{\theta_0}_{L^\infty}.\]
If $M$ is large enough so that \eqref{eq:4.11} in the proof of Theorem \ref{thm:9} is satisfied for $\abs{h} \leq r_0$, then the modulus of continuity $\Omega$ is conserved. In the case of \eqref{eq:5.17},
\begin{equation*}
\frac{m(r^{-1}) \omega'(r)}{k(r) \omega(r)} = \beta \Big(-\log \frac{r}{A}\Big) ^{-1} \Big( \log(10 + r^{-1}) \Big)^{\alpha_1 + \alpha_2} \leq \frac{1}{c\norm{\theta_0}_{L^\infty}}, \quad\text{for } r\leq r_0.
\end{equation*}
Take $M = C \norm{\theta_0}_{L^\infty}^{1 + \frac{\beta}{1-\alpha_1 - \alpha_2}}$ with $C$ large enough, so that the above condition is satisfied. To be precise, set $r_0 > 0$ to be $\left\{ -\log \left( r_0/A\right) \right\}^{1-\alpha_1 - \alpha_2} = \max\{30^{1-\alpha_1 - \alpha_2}, c\beta\norm{\theta_0}_{L^\infty}\}$.

Let $r_1 > 0$ be $\displaystyle\Omega(r_1) = \min\bigg\{ \frac{c_2^{\frac{1}{2}}}{2^{\frac{1}{2}} c_1 c_3}, \frac{c_2}{2c_4}, 1 \bigg\}$, where $c_1,c_2,c_3,c_4$ are numeric constants in the proof of Theorem \ref{thm:8}. According to the proof of Theorem \ref{thm:8}, in particular the last part, we get
\[\norm{\nabla \theta}_{L^\infty} \leq c_1 \frac{\Omega(r_1)}{r_1} \leq \frac{C}{A} \exp\Big(C M^{\frac{1}{\beta}} \Big) \leq C \norm{\nabla\theta_0}_{L^\infty} \exp \Big(C \norm{\theta_0}_{L^\infty}^{\frac{1}{\beta} + \frac{1}{1-\alpha_1 - \alpha_2}} \Big). \qedhere\]
\end{proof}

\section{Asymptotic Decay of Solutions} \label{sec:6}
In this section, we prove Main Theorem 3, i.e., decay of Sobolev norms of solutions to slightly supercritical SQG equation \eqref{eq:1.8}. Recall that $L^\infty$-norm of smooth solutions to drift-diffusion equations decay as follows:
\begin{proposition}[Decay of $L^\infty$-norm, Theorem 4.1 in \cite{Cor04}] \label{prop:13}
Suppose $\theta$ and $u$ are smooth functions on $\R^2 \times [0,T)$ (or $\mathbb{T}^2 \times [0,T)$) satisfying
\[\left\{\begin{array}{rl} \partial_t \theta + u \cdot \nabla \theta + \Lambda^\alpha \theta &= 0, \\ \theta(t=0) &= \theta_0. \end{array} \right.\]
with $0<\alpha\leq 2$. Also assume that $\nabla \cdot u =0$ and $\theta \in L^\infty([0,T]; H^s)$ with $s>1$. Then
\[\norm{\theta(\cdot,t)}_{L^\infty} \leq \frac{\norm{\theta_0}_{L^\infty}}{(1+C \alpha t \norm{\theta_0}_{L^\infty}^\alpha)^{\frac{1}{\alpha}}},\quad 0\leq t < T.\]
The constant $C$ depends only on $\theta_0$.
\end{proposition}

\noindent\textbf{Main Theorem 3.} Consider a slightly supercritical SQG equation \eqref{eq:1.2} satisfying conditions of Main Theorem 1. Assume that the Fourier multiplier $P(\zeta) = P(\abs{\zeta})$ corresponding to the dissipation operator $\mathcal{L}$ satisfies
\[P(\zeta) \geq C k(\abs{\zeta}^{-1}) \geq C_\epsilon \abs{\zeta}^{\beta},\]
for any $0\leq \beta<1$. Suppose $\theta \in C([0,T];H^s)$ for $s>1$ is a global smooth solution to the equation, then
\[\norm{\theta(\cdot,t)}_{\dot{H}^{s}} \leq \exp(- c(t-t_0)) \norm{\theta(\cdot,t_0)}_{\dot{H}^s},\quad t>t_0,\]
where $t_0 > 0$ depends only on $\theta_0$ and $c>0$ depends only on $\theta_0$ and $s$.

\vspace{-.5\baselineskip}\begin{proof}
Fix $\epsilon_1$ and $\epsilon_2$ to be $0<\epsilon_2 < \epsilon_1 < \frac{1}{2}$. Set $\displaystyle\alpha_1 = \frac{\frac{1}{2} - \epsilon_1}{s+\frac{1}{2} - 3\epsilon_1}, \alpha_2 = \frac{\frac{1}{2} - \epsilon_2}{s+\frac{1}{2} - 3\epsilon_2}$ and choose $\alpha \in (\alpha_1, \alpha_2)$. Note that $0<\alpha<\frac{1}{2s+1}$. Let $p,q\in (2,\infty)$ be $p = \frac{2}{1-\alpha}$ and $q = \frac{1}{\alpha}$. Set $\delta_1, \delta_2 > 0$ to be
\[\alpha \delta_1 = \left( s+\frac{1}{2} - 3\epsilon_1 \right)(\alpha - \alpha_1),\quad (1-2\alpha) \delta_2 = \left( s+\frac{1}{2} - 3\epsilon_2 \right)(\alpha_2 - \alpha).\]
If we choose $\epsilon_1, \epsilon_2$ sufficiently close, then $\delta_1$ and $\delta_2$ can be arbitrary small.

Next, we perform the energy estimate by applying $\Lambda^{s}$ to the equation and multiplying by $\Lambda^s \theta$:
\begin{align*}
& \frac{1}{2} \frac{d}{dt} \norm{\Lambda^s \theta}_{L^2}^2 + \norm{\Lambda^{s} P^{\frac{1}{2}}(\Lambda) \theta}_{L^2}^2 = - \int \Lambda^s \big( (u\cdot \nabla) \theta \big) \Lambda^s \theta \,d x \\
&\qquad = - \int \Big( \Lambda^s \big( (u\cdot \nabla) \theta \big) - (u\cdot \nabla) \Lambda^s \theta \Big) \Lambda^s \theta \,d x - \frac{1}{2} \int (u\cdot \nabla) \abs{\Lambda^s \theta}^2 \,d x \\
&\qquad \leq \norm{\Lambda^s \big( (u\cdot \nabla) \theta \big) - (u\cdot \nabla) \Lambda^s \theta}_{L^{p^*}} \norm{\Lambda^s \theta}_{L^p}.
\end{align*}
The second term at the second line vanishes since div $u = 0$. Since $\frac{2}{p} + \frac{1}{q} = 1$ implies $\frac{1}{p^*} = \frac{1}{p} + \frac{1}{q}$, Kato-Ponce type commutator estimate gives the following. (See e.g. \cite{Kat88} or Lemma 7.2 of \cite{Don10})
\begin{align*}
\norm{\Lambda^s \big( (u\cdot \nabla) \theta \big) - (u\cdot \nabla) \Lambda^s \theta}_{L^{p^*}} &\lesssim \norm{\nabla u}_{L^q} \norm{\Lambda^s \theta}_{L^p} + \norm{\nabla \theta}_{L^q} \norm{\Lambda^s u}_{L^p} \\
&\lesssim \norm{\Lambda^{1+\delta_1} \theta}_{L^q} \norm{\Lambda^s \theta}_{L^p} + \norm{\Lambda \theta}_{L^q} \norm{\Lambda^{s+\delta_2} \theta}_{L^p}.
\end{align*}
Recall that $u = \mathcal{R}^\perp m(\Lambda) \theta$ where $m(\zeta) \leq C \abs{\zeta}^\epsilon$ for any $\epsilon>0$. The last inequalities are applications of Hörmander-Mikhlin type multiplier theorem. (See e.g. \cite{Ste93}) The Gagliardo–Nirenberg interpolation inequalities
\begin{align*}
\norm{\Lambda^{1+\delta_1} \theta}_{L^q} &\lesssim \norm{\Lambda^{2\epsilon_1 + \delta_1} \theta}_{L^\infty}^{1-2\alpha} \norm{\Lambda^{s+\frac{1}{2} - \epsilon_1} \theta}_{L^2}^{2\alpha}, \\
\norm{\Lambda^s \theta}_{L^p} &\lesssim \norm{\Lambda^{2\epsilon_1 + \delta_1} \theta}_{L^\infty}^{\alpha} \norm{\Lambda^{s+\frac{1}{2} - \epsilon_1} \theta}_{L^2}^{1-\alpha}, \\
\norm{\Lambda^{1} \theta}_{L^q} &\lesssim \norm{\Lambda^{2\epsilon_2 + 2\delta_2} \theta}_{L^\infty}^{1-2\alpha} \norm{\Lambda^{s+\frac{1}{2} - \epsilon_2} \theta}_{L^2}^{2\alpha}, \\
\norm{\Lambda^{s+\delta_2} \theta}_{L^p} &\lesssim \norm{\Lambda^{2\epsilon_2 + 2\delta_2} \theta}_{L^\infty}^{\alpha} \norm{\Lambda^{s+\frac{1}{2} - \epsilon_2} \theta}_{L^2}^{1-\alpha},
\end{align*}
follow from \cite{Mir18}. Summing up the estimates, we get
\[ \frac{1}{2} \frac{d}{dt} \norm{\Lambda^s \theta}_{L^2}^2 + \norm{\Lambda^{s} P^{\frac{1}{2}}(\Lambda) \theta}_{L^2}^2 \lesssim \big(\norm{\Lambda^{2\epsilon_1 + \delta_1} \theta}_{L^\infty} + \norm{\Lambda^{2\epsilon_2 + 2\delta_2} \theta}_{L^\infty} \big) \norm{\Lambda^{s} P^{\frac{1}{2}}(\Lambda) \theta}_{L^2}^2.\]
Repeating the exact same argument Proposition \ref{prop:13} was proved in \cite{Cor04}, we get
\[\lim_{t\to \infty}\norm{\theta(\cdot,t)}_{L^\infty} = 0.\]
Since $|\nabla \theta|$ is uniformly bounded, the interpolation give that $\norm{\Lambda^\gamma \theta(\cdot,t)}_{L^\infty} \to 0$ as $t\to \infty$ for $0<\gamma<1$. Hence, there exists $t_0 > 0$ so that
\[\frac{1}{2} \frac{d}{dt} \norm{\Lambda^s \theta}_{L^2}^2 \leq - c' \norm{\Lambda^{s} P^{\frac{1}{2}}(\Lambda) \theta}_{L^2}^2 \leq -c \norm{\Lambda^s \theta}_{L^2}^2,\quad \text{for all } t > t_0.\]
Therefore $\norm{\Lambda^s \theta(\cdot,t)}_{L^2} \leq \exp(-c(t-t_0)) \norm{\Lambda^s \theta(\cdot,t_0)}_{L^2}.$
\end{proof}

\begin{remark}
The Fourier multiplier $P$ considered in the statement of Main Theorem 3 do not contain our benchmark case $P(\zeta) = \zeta (\log(10+\abs{\zeta}))^{-\alpha_1}$, $\alpha_1 > 0$. In particular, the assumption that $P(\zeta) \geq c > 0$ was crucially used in the last line. The main issue is that we cannot prove exponential decay of low frequency part. We suggest two ways to overcome such problem.
\begin{enumerate}[label=(\roman*)]
	\item If the spatial domain is a bounded set $\Omega\subset \R^2$ or periodic $\mathbb{T}^2$ (with mean zero assumption $\int_{\mathbb{T}^2} \theta \,dx = 0$), then Poincaré inequality is applicable for the last line of the proof.
	\item It is possible to prove polynomial decay on $\mathbb{R}^2$ as follows:
	\[ \frac{1}{2} \frac{d}{dt} \norm{\Lambda^s \theta}_{L^2}^2 \leq -c' \norm{\Lambda^s P^{\frac{1}{2}}(\Lambda) \theta}_{L^2}^2 \leq -c'' \left(\frac{\norm{\Lambda^s \theta}_{L^2}^{1+\beta}}{\norm{\theta}_{L^2}^\beta}\right)^2, \quad\text{for all } t > t_0,\]
	for $0<\beta<\frac{1}{2s}$. Since $\norm{\theta(\cdot,t)}_{L^2} \leq \norm{\theta_0}_{L^2}$,
	\[\frac{d}{dt} \norm{\Lambda^s \theta}_{L^2} \leq -c\norm{\Lambda^s \theta}_{L^2}^{1+2\beta},\quad\text{for all } t>t_0.\]
	Therefore
	\[\norm{\Lambda^s \theta}_{L^2} \leq \frac{\norm{\Lambda^s \theta(t_0)}_{L^2}}{\big(1 + 2c\beta (t-t_0) \norm{\Lambda^s \theta(t_0)}_{L^2}^{2\beta} \big)^{-\frac{1}{2\beta}}},\quad \text{for all } t>t_0.\]
\end{enumerate}
\end{remark}

\subsection*{Acknowledgment} The author thanks In-Jee Jeong for providing valuable discussions. Also the author thanks the anonymous reviewers for their insightful comments, especially on the decay of solutions. On behalf of all authors, the corresponding author states that there is no conflict of interest.

\end{document}